\newtheorem{Thm}{Theorem}[section]
\newtheorem{Prop}[Thm]{Proposition}
\newtheorem{Lem}[Thm]{Lemma}
\theoremstyle{definition}
\newtheorem{Rem}[Thm]{Remark}
\newtheorem{Def}[Thm]{Definition}
\newcommand{\op}{\operatorname}
\DeclareMathOperator{\Der}{Der}
\DeclareMathOperator{\tr}{tr}
\DeclareMathOperator{\ad}{ad}
\DeclareMathOperator{\dv}{div}
\title{The Elliptic Kashiwara-Vergne Lie algebra in low weights}
\author[F. Naef]{Florian~Naef}
\address{Florian~Naef: Department of Mathematics, Massachusetts Institute of Technology, 182 Memorial Dr, Cambridge, MA 02142, USA}
\email[Florian~Naef]{naeffl@mit.edu}
\author[Y. Qin]{Yuting~Qin}
\address{Yuting~Qin: The Webb Schools, Claremont, California, 91711, USA.}
\email[Yuting~Qin]{emmaqin@mit.edu}
\begin{document}
\maketitle
\begin{abstract}
In this paper, we study the elliptic Kashiwara-Vergne Lie algebra $\mathfrak{krv}$, which is a certain Lie subalgebra of the Lie algebra of derivations of the free Lie algebra in two generators. It has a natural bigrading, such that the Lie bracket is of bidegree $(-1,-1)$. After recalling the graphical interpretation of this Lie algebra, we examine low degree elements of $\mathfrak{krv}$. More precisely, we find that $\mathfrak{krv}^{(2,j)}$ is one-dimensional for even $j$ and zero for $j$ odd. We also compute $\operatorname{dim}(\mathfrak{krv})^{(3,j)} = \lfloor\frac{j-1}{2}\rfloor - \lfloor\frac{j-1}{3}\rfloor$. In particular, we show that in those degrees there are no odd elements and also confirm Enriquez' conjecture in those degrees.
\end{abstract}


\section{Introduction}
The elliptic Kashiwara-Vergne Lie Algebra $\mathfrak{krv} = \mathfrak{krv}_{ell}$ was originally defined in \cite{AKKN} as the symmetry Lie algebra of the Goldman-Turaev Lie bialgebra of a genus 1 surface with 1 boundary component. More precisely, on the vector space spanned by homotopy classes of free loops on the once-punctured torus, Goldman and Turaev define a natural Lie bialgebra structure in terms of intersections and self-intersections of loops (see \cite{AKKN} and references therin). It can be shown to be isomorphic (after a certain completion) to one of Schedler's necklace Lie bialgebras associated to the quiver with one vertex and one loop. Then $\mathfrak{krv}$ can be defined as automorphisms of the completed path algebra on the quiver respecting the Lie bialgebra structure. In \cite{AKKN} it is computed to be
$$
\mathfrak{krv} = \{u \in \Der(L(x, y)) \ |\ u([x, y]) = 0, \ \dv(u) = f([x,y]) \text{ for some $f$}\},
$$
where the divergence operator $\dv$ is explained below. This is closely related to Enriquez's elliptic Grothendieck-Teickmüller Lie algebra $\mathfrak{grt}^\text{ell}$ as defined in \cite{Enriquez}, that appears in the study of elliptic Drinfeld associators. It is conjectured that there is an isomorphism $\mathfrak{grt}^\text{ell} \to \mathfrak{krv}$. Moreover, Enriquez conjectures that $\mathfrak{grt}^\text{ell}$ is generated by the two infinite families given by the Drinfeld generators $\sigma_3, \sigma_5, \dots,$ and $\delta_{2}, \delta_4, \dots,$ (definition is recalled below) and a copy of $\mathfrak{sl}_2$. We show that this conjecture is true in weights $2$ and $3$ (or $1$ and $2$ in the convention used in the literature). More concretely, let $\mathfrak{krv}^{i,j}$ denote the set of derivations in $\mathfrak{krv}$ that increase the $x$ and $y$ degree by $i-1$ and $j-1$, respectively. We find the following

\begin{Thm}
\label{thm:main}
\begin{align*}
\mathfrak{krv}^{2,j} &= \begin{cases}
	\mathbb{R} \delta_{j}, &\text{for } j \text{ even} \\
0, &\text{for } j \text{ odd},
\end{cases}\\
\op{dim}\mathfrak{krv}^{3,j} &= \begin{cases}
	0, &\text{for } j \text{ even} \\
\lfloor\frac{j-1}{2}\rfloor - \lfloor\frac{j-1}{3}\rfloor, &\text{for } j \text{ odd}.
\end{cases}
\end{align*}
\end{Thm}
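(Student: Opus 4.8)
The plan is to turn the computation of $\mathfrak{krv}^{i,j}$ into explicit linear algebra on the bigraded pieces of $L=L(x,y)$. A derivation of bidegree $(i-1,j-1)$ is the same as a pair $(u(x),u(y))\in L^{(i,j-1)}\oplus L^{(i-1,j)}$, and the two defining relations become: (A) the linear condition $D(u):=[u(x),y]+[x,u(y)]=0$, i.e. $u\in\ker\bigl(D\colon L^{(i,j-1)}\oplus L^{(i-1,j)}\to L^{(i,j)}\bigr)$; and (B) $\dv(u)$, which is homogeneous of bidegree $(i-1,j-1)$ in the cyclic words $|A|=A/[A,A]$ ($A$ the free associative algebra), must lie in the image of the power series in $[x,y]$. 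Since $[x,y]^k$ has bidegree $(k,k)$, condition (B) reads $\dv(u)\in\mathbb R\,\tr([x,y]^{i-1})$ when $i=j$ and $\dv(u)=0$ otherwise; as $\tr([x,y])=0$, in weight $2$ this is simply $\dv(u)=0$. Thus $\mathfrak{krv}^{i,j}=\ker\bigl(\dv\colon\ker D\to |A|^{(i-1,j-1)}/(\text{allowed line})\bigr)$, and everything reduces to (i) the dimension of $\ker D$ and (ii) the rank of $\dv$ on $\ker D$.

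First I would compute $\dim\ker D$. The key observation is that $D$ is surjective: its image is $[L^{(i,j-1)},y]+[x,L^{(i-1,j)}]$, which is exactly the bidegree-$(i,j)$ part of $[L,V]$ with $V=\mathbb R x\oplus\mathbb R y$, and since every Lie monomial of length $\ge 2$ is a sum of left-normed brackets $[\,\cdot\,,x]$ and $[\,\cdot\,,y]$ we have $[L,V]\supseteq L_{\ge 2}$. As $i+j\ge 2$ always, $\operatorname{im}D=L^{(i,j)}$ and therefore $\dim\ker D=\dim L^{(i,j-1)}+\dim L^{(i-1,j)}-\dim L^{(i,j)}$. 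This Euler characteristic I evaluate with the bigraded Witt formula $\dim L^{(p,q)}=\tfrac1{p+q}\sum_{d\mid\gcd(p,q)}\mu(d)\binom{(p+q)/d}{p/d}$. For $i=2$ it gives $1$ for $j$ even and $0$ for $j$ odd; for $i=3$ a short case analysis of the three Witt terms according to $j\bmod 6$ collapses them to $\lfloor\frac{j-1}2\rfloor-\lfloor\frac{j-1}3\rfloor$ for $j$ odd (and to $0$ for $j$ even; e.g. $\dim\ker D^{3,4}=0$ while $\dim\ker D^{3,6}=1$).

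The content of the theorem is then condition (B), which accounts for the parity dichotomy. I would exploit the transpose anti-automorphism $S$ of $A$ (reversal of words, fixing $x,y$), which acts on the degree-$n$ part of $L$ by $(-1)^{n-1}$ and descends to the reversal involution $R$ on $|A|$. Conjugation $u\mapsto SuS^{-1}$ preserves $\Der(L)$ and $\ker D$ and acts on bidegree $(i-1,j-1)$ by the scalar $(-1)^{i+j}$, while a direct computation from the definition of the divergence gives the anti-equivariance $\dv(SuS^{-1})=-R(\dv u)$ (the sign being pinned down by the normalization that makes $\delta_2$ divergence-free). Combining these, $R(\dv u)=(-1)^{i+j-1}\dv u$, so $\dv u$ lies in the $(-1)^{i+j-1}$-eigenspace of $R$ on $|A|^{(i-1,j-1)}$. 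For $i\in\{2,3\}$ every cyclic word with one or two $x$'s is reversal-invariant (a necklace with at most two letters of one colour is symmetric), so the $(-1)$-eigenspace of $R$ vanishes in these bidegrees. Hence when $i+j$ is even we get $\dv u=0$ automatically and $\mathfrak{krv}^{i,j}=\ker D^{i,j}$, yielding $\mathbb R\delta_j$ for $(i,j)=(2,\text{even})$ and $\lfloor\frac{j-1}2\rfloor-\lfloor\frac{j-1}3\rfloor$ for $(i,j)=(3,\text{odd})$.

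The remaining, and hardest, case is $i+j$ odd, where $R$ gives no information and I must instead show that $\dv$ is injective on $\ker D$, i.e. that there are no nonzero divergence-free symplectic derivations in these degrees (``no odd elements''). For $i=2$ this is vacuous since $\ker D=0$; the real obstacle is $i=3$ with $j$ even, where $\ker D$ is nonzero and unbounded in $j$ and I must rule out its whole kernel under $\dv$. I expect this to be cleanest in the graphical model recalled earlier, in which $\ker D$, the bracket relation and $\dv$ all become combinatorial operations on trees and necklaces, reducing the injectivity to a uniform combinatorial non-degeneracy statement; equivalently, one identifies $\ker(\dv|_{\ker D})$ with the divergence-free generators, of which weight $3$ contains none of Enriquez's $\sigma_{2k+1}$ (the lowest, $\sigma_3$, first appears in weight $4$). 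Finally, having the dimensions, I would match them against the subalgebra generated by the conjectural generators — $\mathfrak{krv}^{2,j}=\mathbb R\delta_j$ realizes the even Drinfeld generators, and the weight-$3$ count $\lfloor\frac{j-1}2\rfloor-\lfloor\frac{j-1}3\rfloor$ agrees with the dimensions produced by the brackets $[\delta_a,\delta_b]$ together with the $\mathfrak{sl}_2$-action — thereby confirming Enriquez's conjecture in weights $2$ and $3$.
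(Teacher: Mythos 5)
Your reduction and dimension counts are sound, and in places your route is genuinely different from (and cleaner than) the paper's: the paper never invokes the Witt formula, instead producing explicit bases --- $\delta_{2n}=\Theta(x,\ad_{y^{2n}}(x))$ in weight $2$, and in weight $3$ an identification of $F(L)^{(3,\bullet)}$ with antisymmetric polynomials $(X-Y)(Y-Z)(Z-X)p$ modulo $(X+Y+Z)$, whence the count $2a+3b=j-3$. Your surjectivity of $D$ (via $L_n=[L_{n-1},V]$) plus the bigraded Witt formula gives the same numbers with less combinatorics, and I checked they agree in the cases $(2,j)$, $(3,4)$, $(3,6)$, $(3,j\ \mathrm{odd})$. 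Your reversal-involution argument for the automatic vanishing of $\dv$ when $i+j$ is even is essentially the paper's ``small wheels'' lemma, which rests on the formula $\dv(u_\Gamma)=\tr(g-g^*)$ with $g=\partial_x^L\partial_y^{F(L)}(\Gamma)$; you should prove the anti-equivariance $\dv(SuS^{-1})=-R(\dv u)$ honestly rather than ``pinning down the sign'' by the desired conclusion, but that is a repairable blemish.

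The genuine gap is the case $i=3$, $j$ even, i.e.\ the claim $\mathfrak{krv}^{(3,j)}=0$ for $j$ even. Here $\ker D$ is nonzero (you computed $\dim\ker D^{(3,6)}=1$, and it grows with $j$), so you must prove that $\dv$ is \emph{injective} on $\ker D$ in these bidegrees, and your proposal offers no argument: ``I expect this to be cleanest in the graphical model'' and the identification of $\ker(\dv|_{\ker D})$ with ``divergence-free generators, of which weight $3$ contains none of Enriquez's $\sigma_{2k+1}$'' is not a proof --- the latter presupposes structural knowledge of $\mathfrak{krv}$ (or the truth of Enriquez's conjecture) which is precisely what is to be established. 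This is the hardest half of the weight-$3$ statement and the source of the paper's ``no odd elements'' claim. The paper handles it by encoding $u(y)=-\partial_x\Gamma$ as an antisymmetric polynomial $P(X,Y)$, translating the tree condition into $2P(X,Y)=P(Y,-X-Y)-P(X,-X-Y)$ and the divergence condition into $\tfrac{1}{X}P(Y,X-Y)+\tfrac{1}{Y}P(X,Y-X)=0$, and then running an infinite-descent factorization (extracting factors $XY(X+Y)(X-Y)(2X+Y)(X+2Y)$, then repeatedly $XY(X+Y)$, with the system of conditions returning to itself) to conclude $P=0$. Some argument of this kind --- an explicit functional-equation analysis or an equivalent nondegeneracy statement --- is indispensable; without it your proof establishes only $\mathfrak{krv}^{(2,j)}$ and the odd-$j$ half of $\mathfrak{krv}^{(3,j)}$.
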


In Section 2, we recall the basic definitions concerning the space $F(A)$ of symplectic derivation of the free associative algebra $A = \mathbb{R}\langle x,y\rangle$. In Section 3, we recall how one obtains the symplectic derivation Lie subalgebra $F(L) \subset F(A)$ of the free Lie algebra $L \subset A$. We also recall the definitions and properties of partial differential operators on these spaces. In Section 4, we recall the definition of the elliptic $\mathfrak{krv}$ and give some of its properties. In Section 5, we study low degree elements in $\mathfrak{krv}_{ell}$ and show Theorem \ref{thm:main}.

\section*{Acknowledgements}
We would like to thank PRIMES MIT during which this work was completed. In particular, we thank Tanya Khovanova and Pavel Etingof for their help and encouragement. We also thank Anton Alekseev and Leila Schneps for useful discussions.

\section{Symplectic Derivations of $A$}
Let thoughout the paper
$$
A := \mathbb{R}\langle x,y\rangle,
$$
denote the free associative algebra in two generators $x$ and $y$. That is, elements in $A$ are linear combinations of monomials $\prod{x_i},$ where $x_i \in \{x, y\}$. Following \cite{Kontsevich} we introduce the vector space
$$
F(A) := A \otimes A / \{a \otimes b - b \otimes a,  a \otimes bc - ab \otimes c, \ \forall a,b,c \in A \}.
$$
That is the vector space quotient of $A \otimes A$ by the relations $a \otimes b - b \otimes a, \ \forall a,b \in A$ and $a\otimes bc - ab \otimes c, \ \forall a,b,c \in A$. We also define the \emph{trace space},
$$
\tr := A / \{ ab - ba \ | \ a,b \in A\},
$$
and denote the canonical projection $A \to \tr$ by $a \mapsto \tr(a)$.

The following lemma shows that we can identify the spaces $F(A)$ and $\tr$. In particular, there is a canonical map $\tr: A \to F(A)$.
\begin{Lem}
The assignment $f: \ F(A) \ni a\otimes b \to \tr(ab) \in \tr$ defines an isomorphism of vector spaces. 
\end{Lem}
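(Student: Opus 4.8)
The plan is to prove the isomorphism by exhibiting an explicit two-sided inverse, rather than by a dimension count. First I would note that $f$ is induced from the bilinear map $A \times A \to \tr$, $(a,b) \mapsto \tr(ab)$, and hence from a genuine linear map $A \otimes A \to \tr$; the only thing to verify for well-definedness is that this map annihilates the two families of relations we quotient by. The relation $a \otimes b - b \otimes a$ is sent to $\tr(ab) - \tr(ba)$, which vanishes precisely because $\tr$ is the quotient of $A$ by commutators, while $a \otimes bc - ab \otimes c$ is sent to $\tr(abc) - \tr(abc) = 0$ by associativity. Thus $f$ descends to a well-defined linear map $F(A) \to \tr$.

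Next I would construct the candidate inverse $g \colon \tr \to F(A)$ from the linear map $A \to F(A)$, $c \mapsto c \otimes 1$. To see that this descends to the quotient $\tr$, I must check that $ab \otimes 1 = ba \otimes 1$ in $F(A)$ for all $a,b \in A$. Applying the second defining relation of $F(A)$ with $c = 1$ gives $ab \otimes 1 = a \otimes b$ and, symmetrically, $ba \otimes 1 = b \otimes a$; the first relation then identifies $a \otimes b = b \otimes a$, which closes the loop. I expect this short chaining of the two relations through the intermediate class $a \otimes b$ to be the only genuine (and admittedly minor) subtlety in the whole argument, since everything else is forced by the universal properties of the quotients involved.

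Finally, I would verify that both composites are the identity. On one side, $f(g(\tr(a))) = f(a \otimes 1) = \tr(a)$ immediately. On the other side, $g(f(a \otimes b)) = g(\tr(ab)) = ab \otimes 1$, and a single application of the second relation with $c = 1$ rewrites $ab \otimes 1 = a \otimes b$, recovering the original generator. Since the classes of the form $a \otimes b$ span $F(A)$, this establishes $g \circ f = \mathrm{id}$, and combined with $f \circ g = \mathrm{id}$ it shows that $f$ is a linear isomorphism with inverse $g$.
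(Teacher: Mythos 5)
Your proof is correct and takes essentially the same route as the paper: well-definedness of $f$ on the two relations, an inverse sending $\tr(c)$ to $c\otimes 1$ (the paper uses $1\otimes c$, identified with yours by the symmetry relation), with well-definedness of the inverse established by the same chain $ab\otimes 1 = a\otimes b = b\otimes a = ba\otimes 1$. No substantive difference.
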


\begin{proof}
We first show that $f$ is well defined. That is, the assignment $a \otimes b \mapsto \tr(ab)$ sends the relations $a \otimes b - b \otimes a$ and $a \otimes bc - ab \otimes c$ to $\tr(ab) - \tr(ba) = 0$ and $\tr(abc) - \tr(abc) =$, respectively.
An inverse to $f$ is given by $\tr(a) \mapsto 1 \otimes a \in F(A)$. Is well-defined since $1 \otimes ab - 1 \otimes ba = (1 \otimes ab - 1 a \otimes b) + (a \otimes b  - b \otimes a)+ (1b \otimes a- 1 \otimes ba)$. It is an inverse since $\tr(a) \mapsto 1 \otimes a \mapsto \tr(a)$ and $a \otimes b \mapsto \tr(ab) \mapsto 1 \otimes ab = a \otimes b + (1 \otimes ab - 1 a \otimes b)$.
\end{proof}

\begin{Rem}
We can visualize monomials in $A$ as markings on a line, where each marking is labeled by either $x$ or $y$. To multiply two elements in $A,$ we place them next two each other and combine the line. Elements of the trace space $\tr$, or cyclic words, we think of as cirlces with markings labelled by $x$ and $y$. The trace map $A \to \tr$ is then given by joining the ends of the line segment into a circle. The relation $\tr(ab) - \tr(ba) = 0$ says that we can rotate the circle. The presentation of $\tr$ as $F(A)$ is obtained by gluing together a circle from two line segments. The relations correspond to the fact that it does not matter which line segment is first and which one is the second, and that we can split off part of one line segment and join it to the other.
\end{Rem}

We recall the definition of a derivation of an associative algebra. Let $M$ be an $A$-bimodule.
\begin{Def}
A linear map $u:\; A \longrightarrow M$ is a \emph{derivation} if $\forall a, b \in A$
$$
u(ab) = u(a)\;b + a\;u(b), \quad \forall a,b \in A.
$$
We denote the set of derivations of $A$ into $M$ by $\Der(A,M)$ and define $\Der(A) = \Der(A,A)$.
\end{Def}

Since our algebra is free on generators $x$ and $y$ ther are two canonical derivations $A \to A \otimes A$.
\begin{Def}
For $x_0 \in \{ x,y \}$ we define the \emph{partial} of an associative term with respect to $x_0$ to be the linear map,
\begin{align*}
    \textbf{$\partial_{x_0}^A$:}  \;\;\;\;\;\;\;\;\;\;\;
    A \; & \longrightarrow A \otimes A, \ \ \text{defined on monomials by}\\
    \prod a_i \; &\longmapsto \sum\limits_{i \;|\; a_i = {x_0}}^{} \; \Big( \prod\limits_{k=1}^{i-1}a_k \;\otimes  \prod\limits_{k=i+1}^{n} a_k \Big)\\
    \text{and linearly }&\text{extended to arbitrary elements in }A
\end{align*}

We denote $\partial_{x_0}^A (a) = \partial_{x_0}^1(a) \otimes \partial_{x_0}^2(a).$
We sometimes write $\partial_{x_0}$ instead of $\partial_{x_0}^A$ when it is clear from the context which partial we are referring to.
\end{Def}

The following shows that our definition of $\partial_{x_0}$ gives indeed derivations $A \to A \otimes A$.
\begin{Prop}
For $f, g \in A,$ we have $\partial_{x_0}(fg) = \partial_{x_0}(f) g + f \partial_{x_0}(g).$
\end{Prop}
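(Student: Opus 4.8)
The plan is to verify the Leibniz-type identity directly from the definition by expanding both sides on monomials, since $\partial_{x_0}$ was defined on monomials and extended linearly. Because both sides of the claimed identity are bilinear in $f$ and $g$, it suffices to prove the equality when $f$ and $g$ are monomials, say $f = a_1 \cdots a_m$ and $g = b_1 \cdots b_n$ with each $a_i, b_j \in \{x,y\}$; the general case then follows by linearity.

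First I would compute the left-hand side. The product $fg$ is the monomial $a_1 \cdots a_m b_1 \cdots b_n$ of length $m+n$, and by definition $\partial_{x_0}(fg)$ is a sum over all positions whose letter equals $x_0$. I would split this sum into two groups: those positions lying in the $a$-block (indices $1,\dots,m$) and those lying in the $b$-block (indices $m+1,\dots,m+n$). For a position $i \le m$ with $a_i = x_0$, the corresponding term is $(a_1\cdots a_{i-1}) \otimes (a_{i+1}\cdots a_m b_1 \cdots b_n)$, which I would recognize as the term $\partial_{x_0}^1(f)_i \otimes \partial_{x_0}^2(f)_i \cdot g$, i.e. the contribution to $\partial_{x_0}(f)\,g$ (multiplying $g$ into the right tensor factor). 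Similarly, for a position in the $b$-block with $b_j = x_0$, the term is $(a_1 \cdots a_m b_1 \cdots b_{j-1}) \otimes (b_{j+1}\cdots b_n) = f \cdot \partial_{x_0}^1(g)_j \otimes \partial_{x_0}^2(g)_j$, the contribution to $f\,\partial_{x_0}(g)$.

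Collecting the two groups of terms gives exactly $\partial_{x_0}(f)\,g + f\,\partial_{x_0}(g)$, which is the right-hand side, completing the argument. I do not expect any genuine obstacle here; the only point requiring care is the bookkeeping of indices across the concatenation $fg$ and the convention for how the $A$-bimodule structure on $A \otimes A$ acts, namely that $g$ multiplies the second tensor factor of $\partial_{x_0}(f)$ and $f$ multiplies the first tensor factor of $\partial_{x_0}(g)$. Once that indexing is set up correctly, the verification is a routine reindexing of a finite sum.
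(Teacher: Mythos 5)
Your proof is correct and follows essentially the same route as the paper: reduce to monomials by bilinearity, split the sum over occurrences of $x_0$ in the concatenation $fg$ into those lying in the $f$-block and those in the $g$-block, and identify the two groups with $\partial_{x_0}(f)\,g$ and $f\,\partial_{x_0}(g)$ respectively. No gaps.
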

\begin{proof}
\begin{align*}
&\partial_{x_0}(fg) = \partial_{x_0}(\prod(f_i) \prod(g_i))\\
=& \sum\limits_{i \ | (fg)_i = {x_0}}^{} \ \Big(\prod\limits_{k=1}^{i-1} ((fg)_k) \;\otimes  \prod\limits_{k=i+1}^{n} ((fg)_k)\Big)\\
=& \sum\limits_{i \;|\; f_i = {x_0}}^{} \; \Big(\prod\limits_{k=1}^{i-1}f_k \;\otimes  \prod\limits_{k=i+1}^{n} f_k \prod(g_i) \Big) + \sum\limits_{i \;|\; g_i = {x_0}}^{} \; \Big( \prod(f_i) \prod\limits_{k=1}^{i-1}g_k \;\otimes  \prod\limits_{k=i+1}^{n} g_k \Big)\\
=&\partial_{x_0}(f) g + f \partial_{x_0}(g).
\end{align*}

\end{proof}

\begin{Rem}
Graphically, the $\partial_{x_0}^A$ operator marks each $x_0$ and replaces it with a tensor product. That is we identify the space of ``marked" monomials in $A$ with $A\otimes A$. The same idea of marking holds when we take the partial of traces, lie terms and lie trees, which will be defined later in the paper. We mark each $x_0$ and identify the space of these objects with one marking as a more familiar space.
\end{Rem}

\begin{Def}
The \emph{partial} of an object in $F(A)$ is given by
\begin{align*}
    \textbf{$\partial_{x_0}^{F(A)}$:}  \;\;\;\;\;\;\;\;\;\;\;
    F(A) \; & \longrightarrow A\\
    \tr(\prod a_i) \; &\longmapsto \sum\limits_{i \;|\; a_i = {x_0}}^{} \; \Big(\prod\limits_{k=i+1}^{n} a_k \; \;  \prod\limits_{k=1}^{i-1}a_k \Big)
\end{align*}
\end{Def}

The map is well-defined since two elements that are equivalent in $F(A)$, $\tr(ab)$ and $\tr(ba)$, are both sent to the same element
$$\sum\limits_{i \;|\; a_i = {x_0}}^{} \; \Big(\prod\limits_{k=i+1}^{n_a} a_k \; \;  b  \prod\limits_{k=1}^{i-1}b_k \Big) + \sum\limits_{i \;|\; b_i = {x_0}}^{} \; \Big(\prod\limits_{k=i+1}^{n_b} b_k \; \;  a  \prod\limits_{k=1}^{i-1}b_k \Big).$$

Moreover, it can be expressed in terms of $\partial^A_{x_0}$.
\begin{Lem}\label{lem:somethingmadeoutofpartial}
For any $a \in A$ we have
$$
\partial_{x_0}^{F(A)} ( \tr(a) ) =  \partial_{x_0}^2(a)\partial_{x_0}^1(a).
$$
\end{Lem}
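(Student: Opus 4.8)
The plan is to reduce to monomials and then simply unwind both sides of the claimed identity. Both the left-hand side $a\mapsto\partial_{x_0}^{F(A)}(\tr(a))$ and the right-hand side $a\mapsto\partial_{x_0}^2(a)\,\partial_{x_0}^1(a)$ are linear maps $A\to A$: the former because $\tr$ is linear and $\partial_{x_0}^{F(A)}$ is defined on $F(A)\cong\tr$, the latter because $\partial_{x_0}^A$ is linear and multiplication in $A$ is linear. Hence it suffices to verify equality on a single monomial $a=a_1a_2\cdots a_n$ with each $a_k\in\{x,y\}$, and then extend by linearity.

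Before comparing, I would make the meaning of the right-hand side explicit. The symbol $\partial_{x_0}^A(a)=\partial_{x_0}^1(a)\otimes\partial_{x_0}^2(a)$ is Sweedler-type notation for a \emph{sum} of simple tensors, so $\partial_{x_0}^2(a)\,\partial_{x_0}^1(a)$ should be read as the image of $\partial_{x_0}^A(a)$ under the opposite multiplication $A\otimes A\to A$, $u\otimes v\mapsto vu$. With this convention, the definition of $\partial_{x_0}^A$ on monomials, which records for each occurrence of $x_0$ at position $i$ the prefix $\prod_{k=1}^{i-1}a_k$ in the first tensor slot and the suffix $\prod_{k=i+1}^{n}a_k$ in the second, yields
\begin{align*}
\partial_{x_0}^2(a)\,\partial_{x_0}^1(a) \;=\; \sum_{i \,\mid\, a_i = x_0} \Big(\prod_{k=i+1}^{n} a_k\Big)\Big(\prod_{k=1}^{i-1} a_k\Big).
\end{align*}
On the other hand, applying the definition of $\partial_{x_0}^{F(A)}$ directly to $\tr(a)$ gives
\begin{align*}
\partial_{x_0}^{F(A)}(\tr(a)) \;=\; \sum_{i \,\mid\, a_i = x_0} \Big(\prod_{k=i+1}^{n} a_k \;\prod_{k=1}^{i-1} a_k\Big),
\end{align*}
and the two sums agree term by term, since the suffix followed by the prefix is exactly the product of the second tensor factor with the first. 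This settles the monomial case, and linearity finishes the proof.

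I do not expect a genuine obstacle here: the statement is essentially a bookkeeping identity comparing the cyclic partial $\partial_{x_0}^{F(A)}$ with the associative partial $\partial_{x_0}^A$. The only points requiring care are spelling out the Sweedler-notation interpretation of $\partial_{x_0}^2(a)\,\partial_{x_0}^1(a)$ as $\mu^{\mathrm{op}}\circ\partial_{x_0}^A$, and tracking the order of the two arcs so that the factors appear in the correct sequence. Graphically this is transparent via the marking picture from the earlier remark: both operators mark each $x_0$ in the cyclic word, cut there, and read off the resulting linear word starting just after the cut, so the equality holds by inspection.
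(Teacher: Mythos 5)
Your proposal is correct and follows exactly the route the paper intends: the paper's proof is the one-line remark that the identity ``can be checked readily on monomials from the definitions,'' and you simply carry out that check, reducing to monomials by linearity and confirming that both sides equal $\sum_{i \mid a_i = x_0} \bigl(\prod_{k=i+1}^{n} a_k\bigr)\bigl(\prod_{k=1}^{i-1} a_k\bigr)$. Your explicit remark that $\partial_{x_0}^2(a)\,\partial_{x_0}^1(a)$ must be read as $\mu^{\mathrm{op}}$ applied to the full Sweedler sum is a helpful clarification the paper leaves implicit.
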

\begin{proof}
This can be checked readily on monomials from the definitions.
\end{proof}

We wish to show that any derivation $A \to M$ can be expressed in terms of the $\partial^A_{x_0}$'s. For that we need the following
\begin{Def}
We define the following map,
\begin{align*}
    \textbf{\ding{71}\;:} \;\;\;\; \Big( A \otimes A \Big) \times M & \longrightarrow M\\
    (\sum A_i \otimes B_i) \times C & \longmapsto \sum A_i CB_i
\end{align*}
\end{Def}

\begin{Prop}
Let $u$ be a derivation of $\mathbb{R}\langle x,y\rangle$. Then 
$$
u(a) = \partial_x^A (a)\text{\ding{71}}u(x) +  \partial_y^A (a)\text{\ding{71}}u(y) \quad \forall a \in A.
$$
\end{Prop}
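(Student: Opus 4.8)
The plan is to prove the identity by showing that both sides are derivations which agree on the generators $x$ and $y$, and then to reduce an arbitrary monomial to these generators. Write the right-hand side as the linear map
$v \colon A \to M$ given by $v(a) := \partial_x^A(a)\,\text{\ding{71}}\,u(x) + \partial_y^A(a)\,\text{\ding{71}}\,u(y)$. Since both $u$ and $v$ are linear, it suffices to verify $u(a) = v(a)$ on monomials, and the strategy is to compare the two as derivations.

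First I would dispatch the base cases. Because $u$ is a derivation, $u(1) = u(1\cdot 1) = 2\,u(1)$ forces $u(1) = 0$, while $\partial_{x_0}^A(1) = 0$ gives $v(1) = 0$. On the generators, $\partial_x^A(x) = 1\otimes 1$ and $\partial_y^A(x) = 0$, so $v(x) = (1\otimes 1)\,\text{\ding{71}}\,u(x) = 1\cdot u(x)\cdot 1 = u(x)$, and symmetrically $v(y) = u(y)$. Hence $u$ and $v$ coincide on $1$, $x$, and $y$.

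The key step is to check that $v$ is itself a derivation, i.e. $v(bc) = v(b)\,c + b\,v(c)$ for all $b,c \in A$. This combines two facts. The first is the Proposition just established, that each $\partial_{x_0}^A$ is a derivation into the bimodule $A\otimes A$, so $\partial_{x_0}^A(bc) = \partial_{x_0}^A(b)\cdot c + b\cdot\partial_{x_0}^A(c)$, where the bimodule structure is $b\cdot(p\otimes q)\cdot c = bp\otimes qc$. The second is the compatibility of $\text{\ding{71}}$ with this structure: for $\omega\in A\otimes A$, $m\in M$, and $b,c\in A$ one has $(\omega\cdot c)\,\text{\ding{71}}\,m = (\omega\,\text{\ding{71}}\,m)\,c$ and $(b\cdot\omega)\,\text{\ding{71}}\,m = b\,(\omega\,\text{\ding{71}}\,m)$. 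Both follow at once from $(p\otimes q)\,\text{\ding{71}}\,m = pmq$, since $(p\otimes qc)\,\text{\ding{71}}\,m = pm(qc) = (pmq)c$ and $(bp\otimes q)\,\text{\ding{71}}\,m = (bp)mq = b(pmq)$. Applying $\text{\ding{71}}\,u(x_0)$ to the derivation identity for $\partial_{x_0}^A$ and summing over $x_0\in\{x,y\}$ then yields precisely $v(bc) = v(b)\,c + b\,v(c)$.

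Finally I would conclude by a straightforward induction on the length of a monomial: both $u$ and $v$ are derivations $A \to M$ agreeing on $x$ and $y$, so applying the Leibniz rule to $u(bc)$ and $v(bc)$ reduces the claim to strictly shorter words, the base case being the generators handled above. Linearity then gives $u = v$ on all of $A$. The only point requiring genuine care is the bimodule compatibility of $\text{\ding{71}}$ in the third paragraph, where the left/right placement of the inserted factor must be tracked correctly; the remainder of the argument is purely formal.
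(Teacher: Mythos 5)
Your proof is correct and follows essentially the same route as the paper's: an induction on word length driven by the Leibniz rule for $\partial_{x_0}^A$ together with the bimodule compatibility of \ding{71}, which the paper uses implicitly in its displayed computation. Your version merely packages the inductive step as the statement that the right-hand side is itself a derivation agreeing with $u$ on generators, and makes the \ding{71}-compatibility explicit.
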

\begin{proof}
We proof this by induction on the length of $f \in A$. The claim is true for $x$ and $y.$ If the claim is true for $f$ and $g$, then
\begin{align*}
u(fg) &= u(f)g + f\ u(g) \\
&= (\partial_x^A (f) g)\text{\ding{71}}u(x) + (f \partial_x^A (g))\text{\ding{71}}u(x) +  (\partial_y^A (f) g) \text{\ding{71}}u(y) + (f \partial_y^A (g))\text{\ding{71}}u(y) \\
&= \partial_x^A (fg)\text{\ding{71}}u(x) + \partial_y^A (fg)\text{\ding{71}}u(y).
\end{align*}
\end{proof}

A reformulation of the above formula is given by the following
\begin{Prop}
\label{prop:derivuniqueness}
A derivation is uniquely determined by its values on generators. More precisely, an inverse to the map
\begin{align*}
    \Der(A, M) &\longrightarrow M \times M,\\
u &\mapsto (u(x), u(y)),
\end{align*}
is given by
$$
(u_x,u_y) \mapsto \partial_x^A \text{\ding{71}}u_x +  \partial_y^A \text{\ding{71}}u_y.
$$
%
\end{Prop}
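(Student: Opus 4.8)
The plan is to prove that the map
\[
\Der(A,M) \longrightarrow M \times M, \qquad u \mapsto (u(x), u(y)),
\]
is a bijection by exhibiting the proposed formula as a two-sided inverse. The preceding Proposition already provides the crucial decomposition: for any derivation $u$ and any $a \in A$,
\[
u(a) = \partial_x^A(a)\,\text{\ding{71}}\,u(x) + \partial_y^A(a)\,\text{\ding{71}}\,u(y).
\]
So the forward map followed by the claimed inverse returns $\partial_x^A\,\text{\ding{71}}\,u(x) + \partial_y^A\,\text{\ding{71}}\,u(y)$, which by this identity equals $u$ itself; this handles one composition immediately. The remaining work is the other composition and a well-definedness check.

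First I would verify that, for an arbitrary pair $(u_x, u_y) \in M \times M$, the assignment
\[
u := \partial_x^A\,\text{\ding{71}}\,u_x + \partial_y^A\,\text{\ding{71}}\,u_y
\]
is genuinely a derivation $A \to M$. This follows from the Leibniz-type behavior of the partials established in the earlier Proposition, namely $\partial_{x_0}(fg) = \partial_{x_0}(f)g + f\,\partial_{x_0}(g)$, combined with the fact that the operation $\text{\ding{71}}$ is compatible with the bimodule structure: concretely, for $P \in A \otimes A$ and $f, g \in A$ one checks $(\,Pg\,)\,\text{\ding{71}}\,C = (P\,\text{\ding{71}}\,C)\,g$ and $(\,fP\,)\,\text{\ding{71}}\,C = f\,(P\,\text{\ding{71}}\,C)$ directly from the definition $(\sum A_i \otimes B_i)\,\text{\ding{71}}\,C = \sum A_i C B_i$. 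Substituting these into the expansion of $\partial_{x_0}(fg)$ yields exactly $u(fg) = u(f)g + f\,u(g)$, so $u \in \Der(A,M)$.

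Next I would confirm that the composition in the direction $M \times M \to \Der(A,M) \to M \times M$ is the identity. For the derivation $u = \partial_x^A\,\text{\ding{71}}\,u_x + \partial_y^A\,\text{\ding{71}}\,u_y$ one simply evaluates at the generators. Since $\partial_{x}^A(x) = 1 \otimes 1$ and $\partial_{y}^A(x) = 0$ (and symmetrically for $y$), we get $u(x) = (1 \otimes 1)\,\text{\ding{71}}\,u_x = u_x$ and likewise $u(y) = u_y$, so $(u(x), u(y)) = (u_x, u_y)$.

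The only genuine subtlety — and the step I expect to be the main obstacle — is the bimodule bookkeeping in the well-definedness argument, i.e. making sure the symbol $\text{\ding{71}}$ interacts correctly with multiplication on both the left and the right. Everything else is an essentially mechanical unwinding of definitions, and the heavy lifting (the generator decomposition) is already supplied by the preceding Proposition. I would therefore keep the proof short: cite the earlier Proposition for the identity $u = \partial_x^A\,\text{\ding{71}}\,u(x) + \partial_y^A\,\text{\ding{71}}\,u(y)$ to dispatch one composition, verify the generator values to dispatch the other, and note that the derivation property of the constructed map is immediate from the Leibniz rule for the partials together with the associativity of insertion via $\text{\ding{71}}$.
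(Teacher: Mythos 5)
Your proof is correct and follows exactly the route the paper intends: the paper offers no explicit proof, presenting the proposition as ``a reformulation'' of the preceding formula $u(a) = \partial_x^A(a)\,\text{\ding{71}}\,u(x) + \partial_y^A(a)\,\text{\ding{71}}\,u(y)$, which is precisely the identity you cite to dispatch one composition. Your additional checks --- that the formula actually defines a derivation (via the Leibniz rule for $\partial_{x_0}^A$ and the bimodule compatibility of \ding{71}) and that evaluation at generators recovers $(u_x,u_y)$ --- supply the details the paper leaves implicit, and they are all accurate.
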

%
%
%
%

As an application we prove the following lemma
\begin{Lem}
\label{lem:euler}
The following identities hold
\begin{gather*}
\partial_x^A(a) \text{\ding{71}} x = N_x a \quad \partial_y^A(a) \text{\ding{71}} y = N_y a \quad \text{for any $a \in A$ of homogenous bidegree $(N_x, N_y)$},\\
\tr( \partial_x^{F(A)}(f) x) = N_x f \quad \tr( \partial_y^{F(A)}(f) y) = N_y f \quad \text{for any $f \in F(A)$ of homogenous bidegree $(N_x, N_y)$},\\
\partial_x^A(a) \text{\ding{71}} (x \otimes 1 - 1 \otimes x) + \partial_y^A(a) \text{\ding{71}} (y \otimes 1 - 1 \otimes y) = a \otimes 1 - 1 \otimes a \quad \text{for any $a \in A$}.
\end{gather*}
\end{Lem}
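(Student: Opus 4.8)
The plan is to prove all three identities by reducing to the case where $a$ (resp.\ $f$) is a single monomial, since each side is linear and the hypotheses fix a single bidegree. Throughout I use that $A \otimes A$ carries the outer $A$-bimodule structure $a \cdot (u \otimes v) \cdot b = (au) \otimes (vb)$, so that for $c = c_1 \otimes c_2$ the operation reads $(u \otimes v) \, \text{\ding{71}} \, c = (u c_1) \otimes (c_2 v)$.

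For the first identity, fix a monomial $a = a_1 \cdots a_n$ of bidegree $(N_x, N_y)$. By definition $\partial_{x_0}^A(a)$ is the sum over positions $i$ with $a_i = x_0$ of $(a_1 \cdots a_{i-1}) \otimes (a_{i+1} \cdots a_n)$, and applying $\text{\ding{71}}\, x_0$ reinserts $x_0$ into the gap, returning $(a_1 \cdots a_{i-1})\, x_0\, (a_{i+1} \cdots a_n) = a$. Since there are exactly $N_{x_0}$ positions with $a_i = x_0$, the sum is $N_{x_0}\, a$; thus $\text{\ding{71}}\, x_0$ is the inverse of the marking operation.

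For the second identity I would reduce to the first. Writing $f = \tr(a)$ and using Lemma~\ref{lem:somethingmadeoutofpartial}, $\partial_x^{F(A)}(f) = \partial_x^2(a)\, \partial_x^1(a)$, so
$$\tr\big(\partial_x^{F(A)}(f)\, x\big) = \tr\big(\partial_x^2(a)\, \partial_x^1(a)\, x\big) = \tr\big(\partial_x^1(a)\, x\, \partial_x^2(a)\big) = \tr\big(\partial_x^A(a)\, \text{\ding{71}}\, x\big) = \tr(N_x a) = N_x f,$$
where the second equality is cyclicity of the trace and the fourth is the first identity. The $y$-case is identical.

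The third identity is the one requiring a genuinely new observation, namely recognizing a telescoping sum. For a monomial $a = a_1 \cdots a_n$, the contribution of $\partial_{x_0}^A(a)\, \text{\ding{71}}\, (x_0 \otimes 1 - 1 \otimes x_0)$ is, over positions $i$ with $a_i = x_0$,
$$\sum_{i \,:\, a_i = x_0} \Big[\, (a_1 \cdots a_i) \otimes (a_{i+1} \cdots a_n) - (a_1 \cdots a_{i-1}) \otimes (a_i \cdots a_n) \,\Big].$$
Summing over both $x_0 = x$ and $x_0 = y$ ranges over \emph{all} positions $i = 1, \dots, n$. Setting $P_i = (a_1 \cdots a_i) \otimes (a_{i+1} \cdots a_n)$ with conventions $P_0 = 1 \otimes a$ and $P_n = a \otimes 1$, the two summands at position $i$ are exactly $P_i$ and $P_{i-1}$, so the total telescopes to $\sum_{i=1}^n (P_i - P_{i-1}) = P_n - P_0 = a \otimes 1 - 1 \otimes a$. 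I expect the only real obstacle to be bookkeeping: keeping the index conventions in $\partial_{x_0}^A$ straight and using the correct outer bimodule structure when evaluating $\text{\ding{71}}$ on $A \otimes A$, in particular the sign placement in $x_0 \otimes 1 - 1 \otimes x_0$ that makes the telescoping close up.
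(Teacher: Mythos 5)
Your proof is correct, but it proceeds differently from the paper's. For the first and third identities the paper leans on Proposition \ref{prop:derivuniqueness}: it observes that each side of the identity is a derivation (out of $A$, into $A$ or into $A\otimes A$ with the outer bimodule structure), checks agreement on the generators $x$ and $y$, and for the third identity only needs to verify that $a \mapsto a\otimes 1 - 1\otimes a$ satisfies the Leibniz rule. You instead compute both identities directly on monomials, which replaces the induction-on-products step by an explicit combinatorial count in the first case and by the telescoping sum $\sum_i (P_i - P_{i-1}) = P_n - P_0$ in the third. Your route is more self-contained and makes visible exactly why the boundary terms $a\otimes 1$ and $1\otimes a$ appear, at the cost of some index bookkeeping; the paper's route is shorter once Proposition \ref{prop:derivuniqueness} is in hand and avoids touching monomials at all. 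For the second identity the paper merely asserts it is ``a direct property of the first one''; your derivation via Lemma \ref{lem:somethingmadeoutofpartial} and cyclicity of the trace, $\tr\bigl(\partial_x^2(a)\,\partial_x^1(a)\,x\bigr) = \tr\bigl(\partial_x^1(a)\,x\,\partial_x^2(a)\bigr) = \tr\bigl(\partial_x^A(a)\,\text{\ding{71}}\,x\bigr)$, is exactly the intended reduction and usefully spells it out. No gaps.
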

\begin{proof}
The second equation is a direct property of the first one. For the first one, let us denote the left hand side by $u$. By the previous proposition it is the unique derivation with the property $u(x) = x$ and $u(y) = 0$. We check by induction that it is $u(a) = N_x a$ for $a$ of homogenous degree $N_x$. Let $b,c\in A$ be of $x$-degree $k$ and $l$ respectively, then
$$
u(bc) = u(b)c + bu(c) = k bc + l bc = (k+l) bc,
$$
which completes the proof.
To show the last equation, we note that the left hand side is a derivation $A \to A \otimes A$ that sends $x \mapsto x \otimes 1 - 1 \otimes x$ and $y \mapsto y \otimes 1 - 1 \otimes y$. To show the equality it is enough to check that the right hand side defines a derivation, that is
\begin{align*}
ab \otimes 1 - 1 \otimes ab = a (b \otimes 1 - 1 \otimes b) + (a \otimes 1 - 1 \otimes a) b.
\end{align*}
\end{proof}

Now we are ready to define the space of symplectic derivations.
\begin{Def}
A derivation $u \in \Der(A)$ is called \emph{symplectic} if it sends $\omega = [x,y]$ to 0. We will denote the set of symplectic derivations as $\Der_\omega(A)$.
\end{Def}

Our goal is to show that there is a natural bijection between $F(A)$ and $\Der_\omega(A)$. To that purpose, let us define the linear map
$\Phi_1: F(A) \to \Der(A)$, $f \mapsto u_f$,
where
\begin{align*}
    u_f(x) &=  \partial_y^{F(A)} (f), \\
    u_f(y) &=  -\partial_x^{F(A)} (f).
\end{align*}

\begin{Prop}
$\Phi_1$ defines an isomorphism $F^+(A) = F(A)/\tr(1) \to \Der_\omega(A)$.
\end{Prop}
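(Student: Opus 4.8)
The plan is to check, in turn, that $\Phi_1$ lands in $\Der_\omega(A)$, that its kernel is exactly $\tr(1)$ (so that it descends to an injective map on $F^+(A)$), and finally that the descended map is surjective. Since $\Phi_1$ is manifestly linear in $f$ (the partials $\partial_x^{F(A)}, \partial_y^{F(A)}$ are linear), these three points suffice, and because everything is bigraded it is enough to argue on homogeneous pieces.

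For the first point I would compute, using the derivation property together with the definition of $u_f$,
$$
u_f([x,y]) = [u_f(x), y] + [x, u_f(y)] = [\partial_y^{F(A)}(f), y] - [x, \partial_x^{F(A)}(f)].
$$
Thus the task reduces to the single identity $[x, \partial_x^{F(A)}(f)] + [y, \partial_y^{F(A)}(f)] = 0$. To obtain it I would write $f = \tr(a)$ and apply the reverse-multiplication map $\mu\colon A\otimes A \to A$, $p\otimes q \mapsto qp$, to the third identity of Lemma \ref{lem:euler}. Unwinding the bimodule action hidden in $\ding{71}$ and invoking Lemma \ref{lem:somethingmadeoutofpartial} to rewrite $\mu\big(\partial_{x_0}^A(a)\,\ding{71}\,(x_0\otimes 1 - 1\otimes x_0)\big)$ as $[\partial_{x_0}^{F(A)}(f), x_0]$, the left-hand side becomes $[\partial_x^{F(A)}(f), x] + [\partial_y^{F(A)}(f), y]$, which is the negative of the required bracket sum, while the right-hand side $\mu(a\otimes 1 - 1\otimes a) = a - a$ vanishes. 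This shows $u_f \in \Der_\omega(A)$.

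For injectivity, note that $u_f = 0$ forces $\partial_x^{F(A)}(f) = \partial_y^{F(A)}(f) = 0$. Applying the second identity of Lemma \ref{lem:euler} on the homogeneous piece of bidegree $(N_x, N_y)$ gives $N_x f = N_y f = 0$, so $f = 0$ whenever $(N_x, N_y) \neq (0,0)$. The only surviving contribution is the bidegree $(0,0)$ part, which is spanned by $\tr(1)$; hence $\ker \Phi_1 = \tr(1)$ and the induced map $F^+(A) = F(A)/\tr(1) \to \Der_\omega(A)$ is injective.

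Surjectivity is the crux. Given $u \in \Der_\omega(A)$ with $a := u(x)$ and $b := u(y)$, Proposition \ref{prop:derivuniqueness} together with the symplectic condition yields the single relation $[a,y] + [x,b] = 0$. The Euler identities of Lemma \ref{lem:euler} dictate that any preimage $f$ of total degree $N$ must equal $f = \tfrac{1}{N}\tr(ay - bx)$, so I would take this as the candidate potential and verify directly that $\partial_y^{F(A)}(f) = a$ and $\partial_x^{F(A)}(f) = -b$, i.e.\ $u_f = u$. I expect this last verification to be the main obstacle: expanding $\partial_y^{F(A)}(\tr(ay - bx))$ by the Leibniz rule produces, besides the wanted term $a$, two further ``cyclic'' terms that only recombine correctly once the symplectic relation $[a,y] + [x,b] = 0$ is fed in. Once this is settled, $\Phi_1$ induces the claimed isomorphism $F^+(A) \xrightarrow{\sim} \Der_\omega(A)$.
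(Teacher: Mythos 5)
Your argument follows the paper's proof essentially step for step: membership in $\Der_\omega(A)$ is obtained by applying the reverse multiplication $a\otimes b\mapsto ba$ to the third identity of Lemma \ref{lem:euler} and invoking Lemma \ref{lem:somethingmadeoutofpartial}, injectivity modulo $\tr(1)$ follows from the Euler identities exactly as in Lemma \ref{lem:inverseMap}, and surjectivity uses the forced candidate $\tfrac{1}{N}\tr(u(x)y-u(y)x)$ (the paper uses the equivalent $\tfrac{1}{n}\tr(u(y)x)$). The one step you defer---checking that the candidate's partials reproduce $u(x)$ and $-u(y)$---is precisely where the paper does its real work, extracting the identities $n\,u(x)=\partial_y^2(u(y))\,x\,\partial_y^1(u(y))$ and $(n-1)\,u(y)=\partial_x^2(u(y))\,x\,\partial_x^1(u(y))$ from $[u(x),y]+[x,u(y)]=0$ by applying suitable contraction operators; this verification does go through as you anticipate, so your plan is sound but would need that computation written out to be complete.
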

We split the proof of injectivity and surjectivity into the next two lemmas.

\begin{Lem}
\label{lem:inverseMap}
The kernel of the map $\Phi_1: F(A) \to \Der(A)$ is linearly spanned by $tr(1)$.
\end{Lem}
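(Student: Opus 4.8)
The plan is to compute the kernel of $\Phi_1$ explicitly by understanding when the associated derivation $u_f$ vanishes. Since a derivation is determined by its values on the generators (Proposition~\ref{prop:derivuniqueness}), the element $f$ lies in the kernel precisely when $u_f(x) = \partial_y^{F(A)}(f) = 0$ and $u_f(y) = -\partial_x^{F(A)}(f) = 0$ simultaneously. So the problem reduces to characterizing those $f \in F(A)$ (equivalently, cyclic words $\tr(a)$) that are annihilated by both partial operators $\partial_x^{F(A)}$ and $\partial_y^{F(A)}$.

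First I would reduce to homogeneous elements, since both $\partial_x^{F(A)}$ and $\partial_y^{F(A)}$ are bihomogeneous (they lower the $x$-degree, resp.\ $y$-degree, by one), so the kernel is the direct sum of its homogeneous pieces and it suffices to treat each bidegree $(N_x, N_y)$ separately. The key computational tool is Lemma~\ref{lem:euler}: for $f$ of bidegree $(N_x, N_y)$ one has $\tr(\partial_x^{F(A)}(f)\, x) = N_x f$ and $\tr(\partial_y^{F(A)}(f)\, y) = N_y f$. Thus if $\partial_x^{F(A)}(f) = 0$ then $N_x f = 0$, and if $\partial_y^{F(A)}(f) = 0$ then $N_y f = 0$. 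Consequently, if $f$ is in the kernel and $f \neq 0$, then we must have $N_x = 0$ and $N_y = 0$, forcing $f$ to be a scalar multiple of $\tr(1)$.

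Conversely, I would check that $\tr(1)$ does indeed lie in the kernel: since $\tr(1)$ has no occurrences of $x$ or $y$, both $\partial_x^{F(A)}(\tr(1))$ and $\partial_y^{F(A)}(\tr(1))$ are empty sums, hence zero, so $u_{\tr(1)} = 0$. Combining the two directions shows that $\ker \Phi_1 = \mathbb{R}\,\tr(1)$, as claimed.

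I do not anticipate a serious obstacle here; the crux is simply recognizing that the Euler-type identities in Lemma~\ref{lem:euler} immediately force the degree to vanish on any kernel element, which is a clean and short argument. The only point requiring minor care is the bookkeeping that lets one pass from "annihilated by both partials" to "bidegree $(0,0)$," namely observing that a nonzero homogeneous element has a well-defined bidegree and that the two identities pin down each coordinate independently.
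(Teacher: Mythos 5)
Your proposal is correct and follows essentially the same route as the paper: both arguments hinge on the Euler identities of Lemma~\ref{lem:euler} to show that a kernel element of nonzero degree must vanish, leaving only multiples of $\tr(1)$. The only cosmetic difference is that you apply the two identities separately in each bidegree $(N_x,N_y)$, while the paper combines them into the single expression $\frac{1}{N}\tr(y\,u_f(x)-x\,u_f(y))=f$ in total degree $N$.
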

\begin{proof}
It is clear that $tr(1)$ is in the kernel of $\Phi_1$. To show the converse, we assume that $f \in F(A)$ is such that $u_f = 0$. We can assume that $u_f \in \Der(A)$ is homogeneous of degree $N$, that is $u(x)$ and $u(y)$ have degree $N-1$ (containing $N-1$ symbols). We define
$$
g =  \frac{1}{N}\tr(y u_f(x) -x u_f(y))
$$
and show that $f = g$. Namely,
$$
g = \frac{1}{N} ( \tr(y \partial^{F(A)}_y (f) + x \partial^{F(A)}_x (f)) = f,
$$
where we used Lemma \ref{lem:euler} in the last equality.

\begin{Lem}
The image of $F(A)$ under $\Phi$ is $\Der_{\omega}(A)$, the set of derivations that send $[x,y]$ to 0.
\end{Lem}

First, we show that the image of $F(A)$ is a subset of $\Der_{\omega}(A)$. Let $f \in F(A)$ and $u = u_f$, then we have
\begin{align*}
u([x,y]) &= u(x)\; y - y\; u(x) + x\;u(y) - u(y)\;x \\
&=  \partial_y f\; y - y\; \partial_y f - x\; \partial_x f + \partial_x f \;x.
\end{align*}
To show that this vanishes we apply the map $a\otimes b \to ba$ to the third equation in Lemma \ref{lem:euler} to obtain
\begin{align*}
\partial^2_x(a)\partial^1_x(a)x - x \partial^2_x(a)\partial^1_x(a) + \partial^2_y(a)\partial^1_y(a)y - y \partial^2_y(a)\partial^1_y(a) = 0.
\end{align*}
Using the definition of $\partial^{F(A)}$ in terms of $\partial^A$ (Lemma \ref{lem:somethingmadeoutofpartial}) it follows that for $f = \tr(a)$, we have
$$
u_f([x,y]) = 0.
$$

Next, we show that if a derivation $u$ is in $\Der_{\omega}(A),$ it must be in the image of $F(A)$ under $\Phi.$

A derivation that sends $\omega$ to 0 satisfies $u([x,y]) = 0,$ or $[u(x),y] + [-u(y),x] = 0.$

We define the following maps:
\begin{align*}
    p_1: A &\longrightarrow A\otimes A \otimes A\\
    a &\mapsto \partial_x^A(\partial_x^1(a)) \otimes \partial_x^2(a)\\
    p_2: A &\longrightarrow A\otimes A \otimes A\\
    a &\mapsto \partial_x^A(\partial_y^1(a)) \otimes \partial_y^2(a)\\
    \text{\ding{71}}_{x_i}: A\; \otimes\; &A \otimes A \longrightarrow A\\
    a\;\otimes\; &b \otimes c \; \; \mapsto b {x_i} ca
\end{align*}
Applying \ding{71}$_{x}$ $\circ \; p_1$ to both sides of $[u(x),y] + [-u(y),x] = 0$ gives
\begin{align*}
    \text{\ding{71}}_{x} \ \circ \; p_1 (u(x) \ y - y \ u(x) - u(y) \ x + x \ u(y)) &= 0
\end{align*}
$$-\partial_x^2(u(y)) x \partial_x^1(u(y)) + \partial_x^1(u(y)) x \partial_x^2(u(y)) = 0,$$ or $(n-1) u(y) = \partial_x^2(u(y)) x \partial_x^1(u(y)).$


Applying \ding{71}$_{y}$ $\circ \; p_2$ gives $$\partial_x^2 (u(x)) x \partial_x^1 (u(x)) - \partial_y^1 x \partial_y^2(u(y)) = 0 $$ or $n u(x) = \partial_y^2(u(y)) x \partial_y^1(u(y)).$

Consider the cyclic word $f = \tr(u(y)x).$
Let $u' = \Phi(f).$
$$u'_f(x) = \partial_y^2(u(y)) x \partial_y^1(u(y)) = n u(x)$$
$$u'_f(y) = u(y) + \partial_x^2(u(y)) x \partial_x^1(u(y)) = n u(y).$$

Therefore $u$ must come from the cyclic word $\frac{1}{n} \tr(u(y)x)$ from the map $\Phi.$

\end{proof}

\section{Symplectic Derivations of $L$}
In this section we define the space of symplectic derivations on the free Lie algebra in two generators. To that purpose, we define the free Lie algebra $L$ in two generators  $x$ and $y$ to be the smallest Lie subalgebra of $(A , [a,b] = ab - ba)$ containing $x$ and $y$.
%
%
%
%
The Lie version of $F(A)$ is now given by the following
\begin{Def}
Let $F(L)$ denote the vector space quotient of $L \otimes L$ by the relations $a \otimes b - b \otimes a, \ \forall a,b \in L$ and $a\otimes [b,c] - [a,b] \otimes c, \ \forall a,b,c \in L$.
Let $\Theta : L \otimes L \to F(L)$ denote the canonical projection.
\end{Def}

\begin{Prop}
The natural map $F(L) \to F(A)$ sending $(a,b) \to \tr(ab)$ is injective.
\end{Prop}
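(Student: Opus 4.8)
The plan is to prove injectivity by exhibiting an explicit left inverse (a retraction) $r\colon \tr\cong F(A)\to F(L)$ to the map $m\colon F(L)\to F(A)$, $\Theta(a\otimes b)\mapsto \tr(ab)$. Since $m$ is a linear map of vector spaces, producing an $r$ with $r\circ m=\op{id}_{F(L)}$ (even up to a nonzero scalar in each degree) immediately forces $\ker m=0$. The map $m$ is homogeneous for the bigrading, so it suffices to argue in each total degree $n$; the image of $F(L)$ lives in degrees $\geq 2$, and we set $r=0$ on the degree $0$ and $1$ parts of $\tr$.

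To define $r$, write $\delta(b_1\cdots b_m)=[\cdots[[b_1,b_2],b_3]\cdots,b_m]\in L_m$ for the left-normed (Dynkin) bracketing, with $\delta(b_1)=b_1$. For a cyclic word of length $n\geq 2$ with letters $a_i\in\{x,y\}$ I set
\[
r\big(\tr(a_1a_2\cdots a_n)\big)=\frac1n\sum_{i=1}^{n}\Theta\big(a_i\otimes \delta(a_{i+1}a_{i+2}\cdots a_{i+n-1})\big),
\]
indices read modulo $n$. Averaging over the $n$ cyclic shifts makes the right-hand side invariant under rotation of $a_1\cdots a_n$, so $r$ is well defined on $\tr$ (one need not separately check the cyclic relation), and it manifestly lands in $F(L)$.

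Before checking that $r$ is a left inverse I would record a reduction that streamlines the verification: $F(L)_n$ is spanned by the elements $\Theta(g\otimes \ell)$ with $g\in\{x,y\}$ a generator and $\ell\in L_{n-1}$. This follows from the defining relations, since $L_p=[L_1,L_{p-1}]$ lets one write any $a\in L_p$ with $p\geq2$ as a sum of brackets $[g_j,a_j']$, and then the relation gives $\Theta([g_j,a_j']\otimes b)=\Theta(g_j\otimes[a_j',b])$, lowering the degree of the left tensor factor until it is a single generator. On such a spanning element the image $m(\Theta(g\otimes\ell))=\tr(g\ell)$ is a single generator sitting outside a Lie word, which is the convenient shape for applying $r$.

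The main step, and the principal obstacle, is to show that $r\circ m$ is injective on each $F(L)_n$ — indeed I expect $r\circ m=\lambda_n\op{id}$ with $\lambda_n\neq0$, after which dividing $r$ by $\lambda_n$ gives an honest retraction. The difficulty is that $\delta$ applied to a cyclic tail is \emph{not} itself a Lie projector, so the sum defining $r\circ m(\Theta(g\otimes\ell))$ does not collapse term by term. I would evaluate it by expanding $\ell\in L_{n-1}\subset A_{n-1}$ into words, feeding these into the formula, and repeatedly applying the relation $\Theta(p\otimes[q,s])=\Theta([p,q]\otimes s)$ (i.e.\ the Jacobi structure of $F(L)$) to reassemble each summand as a multiple of $\Theta(g\otimes\ell)$, while using the Dynkin--Specht--Wever theorem, $\delta(w)=m\,w$ for $w\in L_m$, to control the Lie-idempotent contributions. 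A direct computation in low weight supports this: one finds $r\circ m=\op{id}$ on $F(L)_2$ and $r\circ m=3\,\op{id}$ on $F(L)_4$ (the weight $3$ piece being zero), consistent with $\lambda_n=n-1$. Establishing that $\lambda_n\neq0$ in every degree — equivalently, that this Dynkin-type averaging never annihilates a nonzero class — is the crux, and is exactly where the free Lie structure is genuinely used.
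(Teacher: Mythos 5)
Your strategy is sound in outline, but as written it has a genuine gap: the entire content of the proof is the claim that $r\circ m=\lambda_n\op{id}$ with $\lambda_n\neq 0$, and you explicitly leave this unproven, offering only verifications in total degree $\le 4$ and the sentence ``I would evaluate it by expanding\dots''. Checking low weights is not a proof, and it is not even clear a priori that $r\circ m$ is a scalar on $F(L)_n$ (there is no Schur-type argument available), so the ``crux'' you name really is the whole theorem. To close the gap along your lines, observe that your cyclic average regroups as
$$
r(w)=\tfrac1n\Big(\Theta\big(x\otimes\delta(\partial_x^{F(A)}w)\big)+\Theta\big(y\otimes\delta(\partial_y^{F(A)}w)\big)\Big),
$$
since summing the Dynkin bracket of the tail over all starting positions with $a_i=x_0$ is exactly $\delta$ applied to $\partial_{x_0}^{F(A)}w$. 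For $w=m(\gamma)$ with $\gamma\in F(L)$ one has $\partial_{x_0}^{F(A)}(m(\gamma))\in L_{n-1}$ (this is Proposition \ref{prop:partialinL}), so Dynkin--Specht--Wever gives $\delta(\partial_{x_0}w)=(n-1)\partial_{x_0}w$, and then the Euler identity $\Theta(x\otimes\partial_x\gamma)+\Theta(y\otimes\partial_y\gamma)=n\gamma$ --- which must be proved \emph{in} $F(L)$, not merely after mapping to $F(A)$, e.g.\ by the marking/regluing argument --- yields $r(m(\gamma))=(n-1)\gamma$. With those three ingredients supplied your retraction works and $\lambda_n=n-1\neq0$ for $n\ge2$; without them the argument is a conjecture.

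For comparison, the paper reaches the same end more economically: it first uses the defining relations of $F(L)$ to put a kernel element in the normal form $x\otimes\alpha(x,\dots,x,y)$ with $\alpha\in L$ and the polarized expression symmetric in the $x$'s, and then applies $\partial_x^{F(A)}$ to $\tr(x\alpha)$, which returns $\alpha$ directly. This sidesteps the Dynkin idempotent entirely (no need for $\delta$ or the $n-1$ factor) because the Lie structure of the output is guaranteed by the normal form rather than recovered by a Lie projector. Both routes ultimately rest on the same fact --- that the cyclic partial $\partial_x$ of an element of $F(L)$ is again a Lie element and recovers the element up to a nonzero scalar --- but the paper proves that fact, whereas your write-up postulates it.
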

\begin{proof}
Let $\gamma \in F(L)$ be an element in the kernel of $F(L) \to F(A)$. Let us assume that it contains exactly $n+1$ symbols $x$. By using the relations we can assume that it is of the form $\gamma = x \otimes \alpha(x,\ldots,x,y)$ where $\alpha$ is a Lie polynomial in $n$ variables that is linear in the first $n$ variables. Moreover, we can assume that $x_1 \otimes \alpha(x_2,\ldots,x_{n+1},y)$ is symmetric in the $x_i$'s as an element in $F(L)$ and hence also in $F(A)$. Then we obtain
$$
\partial_x \tr(x \alpha(x,\ldots,x,y)) = \alpha(x,\ldots,x,y),
$$
which was assumed to be 0. But then $\gamma = 0$.
\end{proof}




\begin{Rem}
We can visualize monomials in $L$ as a rooted tree where each node has degree 1 or 3, and where one node is marked as the root. To take the brackets of two elements in $L,$ we connect the roots of the two trees to a new root. Then, we can visualize $F(L)$ as unrooted trees, as it sends two rooted trees to an unrooted tree by connecting their roots (or combining the unlabeled node). The relations amount to the facts that after gluing there is no first or second branch ($ a\otimes b- b\otimes a$), and that the newly created edge is indistinguishable from any other edge ($a \otimes [b,c] - [a,b] \otimes c$).
\end{Rem}

Recall that $A$ acts on $L$, $\ad : A \times L \to L ; (a,l) \to \ad_a(l)$ by the adjoint action defined by the recursive formulas
\begin{align*}
\ad_x(l) &= [x,l] &\ad_{xa}(l) &= [x, \ad_a(l)] \\
\ad_y(l) &= [y,l] & \ad_{ya}(l) &= [y, \ad_a(l)].
\end{align*}
Similarly we define the adjoint action on any bimodule, for instance on $A \otimes A$.


Let $\epsilon: A \to \mathbb{R}$ denote the algebra homomorphism that sends $1 \mapsto 1$ and $x,y \mapsto 0$, that is picking out the constant term.
\begin{Def}
For $x_0 \in \{ x,y \}$. We define the \emph{partial} of a Lie term $l \in L$ with respect to $x_0$ by the formula
$$
\partial^L_{x_0}(l) = \partial^1_{x_0}(l) \epsilon( \partial^2_{x_0}(l)),
$$
that is we look for the everything that prefixes $x_0$.
\end{Def}

\begin{Lem}
The $\partial^L$ satisfies the following Leibniz identity,
$$
\partial^L_{x_0}([l,m]) = l \; \partial^L_{x_0}(m) - m \; \partial^L_{x_0}(l)
$$
\end{Lem}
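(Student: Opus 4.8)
The plan is to first upgrade the Leibniz rule from the associative partial $\partial^A_{x_0}$ to the Lie partial $\partial^L_{x_0}$ for \emph{arbitrary} products in $A$, and only at the very end to specialise to the commutator $[l,m] = lm - ml$ and exploit that $l,m$ lie in the free Lie algebra.

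First I would record the behaviour of $\partial^L_{x_0}$ on a product $fg$ of two arbitrary elements $f,g \in A$. By definition $\partial^L_{x_0}(fg) = \partial^1_{x_0}(fg)\,\epsilon\!\left(\partial^2_{x_0}(fg)\right)$, and the Leibniz rule for $\partial^A_{x_0}$ proved above gives $\partial^A_{x_0}(fg) = \partial^1_{x_0}(f)\otimes\partial^2_{x_0}(f)\,g + f\,\partial^1_{x_0}(g)\otimes\partial^2_{x_0}(g)$. Applying the recipe ``multiply the first tensor factor by $\epsilon$ of the second'' and using that $\epsilon$ is an algebra homomorphism, so that $\epsilon\!\left(\partial^2_{x_0}(f)\,g\right) = \epsilon\!\left(\partial^2_{x_0}(f)\right)\epsilon(g)$, I expect to obtain the intermediate identity
\begin{equation*}
\partial^L_{x_0}(fg) = \partial^L_{x_0}(f)\,\epsilon(g) + f\,\partial^L_{x_0}(g), \qquad f,g \in A .
\end{equation*}

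Next I would apply this intermediate identity to both orderings appearing in $[l,m] = lm - ml$ with $l,m \in L$, which yields
\begin{equation*}
\partial^L_{x_0}([l,m]) = \partial^L_{x_0}(l)\,\epsilon(m) + l\,\partial^L_{x_0}(m) - \partial^L_{x_0}(m)\,\epsilon(l) - m\,\partial^L_{x_0}(l).
\end{equation*}
The final step is the observation that kills the two $\epsilon$-terms: every element of the free Lie algebra $L$ lies in the augmentation ideal $\ker\epsilon$, since $L$ is generated by $x$ and $y$ under the bracket, the generators have vanishing constant term, and the bracket of two elements with vanishing constant term again has vanishing constant term. Hence $\epsilon(l) = \epsilon(m) = 0$, and the desired formula $\partial^L_{x_0}([l,m]) = l\,\partial^L_{x_0}(m) - m\,\partial^L_{x_0}(l)$ drops out.

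I do not anticipate a genuine obstacle here. The one point requiring care is the bookkeeping in the first step—keeping track of which tensor factor $\epsilon$ is applied to, and invoking multiplicativity of $\epsilon$ at exactly the right moment so that the $\epsilon(g)$ factor is split off cleanly. Once the intermediate identity and the fact that $\epsilon|_L = 0$ are both in hand, the rest is purely formal.
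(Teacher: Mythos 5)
Your proposal is correct and follows essentially the same route as the paper: the paper likewise applies $\operatorname{id}\otimes\epsilon$ to the associative Leibniz identity for $\partial^A_{x_0}$ and kills the extra terms using $\epsilon(\partial^2(m)l)=\epsilon(\partial^2(l)m)=0$, i.e.\ the vanishing of constant terms of elements of $L$. Your only cosmetic difference is deriving the intermediate identity for a general product $fg$ before expanding the commutator, rather than applying $\operatorname{id}\otimes\epsilon$ directly to the commutator form of the Leibniz rule.
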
 
\begin{proof}
We apply $\op{id} \otimes \epsilon$ to
$$
\partial^A([l,m]) = [l, \partial^A(m)] + [\partial^A(l), m],
$$
and note that $\epsilon(\partial^2(m) l) = \epsilon( \partial^2(l) m) = 0$ (since $m$ and $l$ have no constant terms) to obtain the required identity.
\end{proof}

An alternative definition could be defined according to the following procedure.
First, we sum over all occurences of $x_0$ marking each $x_i = x_0$ in $l$ as $\hat{x_0}$. Using the Jacobi relations, we shuffle the new term with a marked $x_0$ into the form $\ad_{a} (\hat{x_0})$ for some $a_i \in A$ without constant term. Then, we define $\partial_{x_0}^L (l) = a$. To see that this gives the same result, we observe that this procedure satisfies the same Leibniz identity, that is let $l, m \in L$ and let $\hat{l} = \ad_{l_{x_0}} (\hat{x}_0)$ and $\hat{m} = \ad_{m_{x_0}} (\hat{x}_0)$ denote their marked versions. Then marking $[l,m]$ we obtain
\begin{align*}
[l , \hat{m}] + [\hat{l}, m] & = [l, \ad_{m_{x_0}}( \hat{x}_0 )] - [m ,\ad_{l_{x_0}} ( \hat{x}_0 )] \\
& = \ad_{l m_{x_0}} ( \hat{x}_0 ) - \ad_{m l_{x_0}} ( \hat{x}_0 ).
\end{align*}
Replacing $\hat{x}_0$ with the symbol $\otimes$ we obtain the following

\begin{Lem}
For $l \in L$ we have the identity
$$
\partial^A(l) = \ad_{\partial^L(l)} (1 \otimes 1).
$$
\end{Lem}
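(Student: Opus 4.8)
The plan is to prove the identity by induction on the bracket length of $l$, using that both sides obey matching Leibniz rules and agree on the generators $x,y$. By linearity it suffices to treat bracket monomials, so the inductive step concerns $l = [p,q]$ with $p,q \in L$ of smaller bracket length. The two ingredients are the Leibniz rule for $\partial^A$ (the Proposition on partials) and the Leibniz rule for $\partial^L$ (the preceding Lemma). First I would rewrite the associative Leibniz rule in adjoint form on the bimodule $A \otimes A$: for $p,q \in L$,
\begin{align*}
\partial^A([p,q]) &= \partial^A(p)\,q + p\,\partial^A(q) - \partial^A(q)\,p - q\,\partial^A(p)\\
&= \big(p\,\partial^A(q) - \partial^A(q)\,p\big) - \big(q\,\partial^A(p) - \partial^A(p)\,q\big)\\
&= \ad_p\big(\partial^A(q)\big) - \ad_q\big(\partial^A(p)\big),
\end{align*}
where the groupings use exactly the outer bimodule structure $z\cdot(P\otimes Q) = zP \otimes Q$, $(P\otimes Q)\cdot z = P \otimes Qz$ under which $\partial^A$ is a derivation, and $\ad_z(P\otimes Q) = zP\otimes Q - P \otimes Qz$ is the adjoint action on $A \otimes A$.

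For the base case, on $l = x$ with $x_0 = x$ we have $\partial^A_x(x) = 1 \otimes 1$ and $\partial^L_x(x) = 1$, so $\ad_1(1\otimes 1) = 1 \otimes 1$ matches; the cases $l = y$ or $x_0 \ne l$ give $0 = \ad_0(1\otimes 1)$. For the inductive step, assuming the identity for $p$ and $q$, I would compute
\begin{align*}
\partial^A([p,q]) &= \ad_p\big(\ad_{\partial^L(q)}(1\otimes 1)\big) - \ad_q\big(\ad_{\partial^L(p)}(1\otimes 1)\big)\\
&= \ad_{p\,\partial^L(q)}(1\otimes 1) - \ad_{q\,\partial^L(p)}(1\otimes 1)\\
&= \ad_{p\,\partial^L(q) - q\,\partial^L(p)}(1\otimes 1) = \ad_{\partial^L([p,q])}(1\otimes 1),
\end{align*}
where the second equality uses that $\ad$ is an algebra homomorphism $A \to \op{End}(A\otimes A)$, i.e. $\ad_a\ad_b = \ad_{ab}$ (immediate from the recursion $\ad_{x_0 a} = \ad_{x_0}\circ\ad_a$), and the last equality is the Leibniz rule for $\partial^L$.

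The main point to nail down — and the only genuine subtlety — is the compatibility of the two meanings of $\ad_p$ in the argument: in the Leibniz rewriting $\ad_p$ is the commutator action $w \mapsto pw - wp$, whereas in the homomorphism step $\ad_p$ is the iterated-bracket operator from the recursion. These agree precisely because $p$ is a Lie element, and I would record this as a short auxiliary induction whose step reads $\ad_{[a,b]} = \ad_{ab} - \ad_{ba} = \ad_a\ad_b - \ad_b\ad_a$, which one checks acts as $w \mapsto [a,b]w - w[a,b]$. Once this is in place the induction closes. Equivalently — and this is the viewpoint sketched just before the statement — one can argue directly: mark each occurrence of $x_0$ in $l$, write the marked element as $\ad_a(\hat{x}_0)$ with $a = \partial^L_{x_0}(l)$, and substitute $\hat{x}_0 \mapsto 1 \otimes 1$. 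Since this substitution intertwines left and right multiplication by words (hence bracketing by the generators) with the bimodule structure on $A \otimes A$ (hence the action $\ad$), and since substituting $\hat{x}_0 \mapsto\ \otimes$ into the associative expansion of $l$ reproduces exactly the prefix-$\otimes$-suffix sum defining $\partial^A_{x_0}(l)$, the two sides coincide.
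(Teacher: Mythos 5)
Your proof is correct and follows essentially the same route as the paper: both sides are shown to agree on generators and to satisfy matching Leibniz identities (the derivation property of $\partial^A$ into the bimodule $A\otimes A$ on one side, the Lemma $\partial^L([l,m]) = l\,\partial^L(m) - m\,\partial^L(l)$ combined with $\ad_a\ad_b = \ad_{ab}$ on the other), which is exactly the content of the paper's marking argument with $\hat{x}_0$ replaced by $1\otimes 1$. Your explicit verification that the recursive $\ad_p$ agrees with the commutator action $w \mapsto pw - wp$ for Lie elements $p$ is a worthwhile detail that the paper leaves implicit.
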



%

\begin{Def}
For $\gamma \in F(L),$ we define the \emph{partial} of the Lie tree $\gamma$ with respect to $x_0$ by treating $\gamma$ as an object in $F(A)$.
\end{Def}

\begin{Prop}
\label{prop:partialinL}
$\partial_{x_0} (\gamma) \in L$ for all $\gamma \in F(L).$
\end{Prop}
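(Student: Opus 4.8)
The plan is to reduce the statement to a computation with the associative partials $\partial^A_{x_0}$. Since $F(L)$ is spanned by elements $\Theta(a \otimes b)$ with $a,b \in L$, and these are sent to $\tr(ab) \in F(A)$ under the injection $F(L) \to F(A)$, it suffices to show $\partial^{F(A)}_{x_0}(\tr(ab)) \in L$ for all $a,b \in L$. First I would expand this partial explicitly: by Lemma \ref{lem:somethingmadeoutofpartial} we have $\partial^{F(A)}_{x_0}(\tr(c)) = \partial^2_{x_0}(c)\,\partial^1_{x_0}(c)$, and applying the Leibniz rule for $\partial^A_{x_0}$ to $c = ab$ gives
$$
\partial^{F(A)}_{x_0}(\tr(ab)) = \sum_i a''_i\, b\, a'_i + \sum_j b''_j\, a\, b'_j,
$$
where $\partial^A_{x_0}(a) = \sum_i a'_i \otimes a''_i$ and $\partial^A_{x_0}(b) = \sum_j b'_j \otimes b''_j$. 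Both summands are instances of the \emph{flip--multiply} map $\phi_c \colon u \otimes v \mapsto v\,c\,u$ applied to $\partial^A_{x_0}$ of a Lie element, namely $\phi_b(\partial^A_{x_0}(a))$ and $\phi_a(\partial^A_{x_0}(b))$.

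The heart of the argument is then the following key lemma: for every $c \in L$ and every $z \in A$, one has $\phi_c(\ad_z(1 \otimes 1)) \in L$. Granting this, I would combine it with the identity $\partial^A_{x_0}(l) = \ad_{\partial^L_{x_0}(l)}(1 \otimes 1)$ for $l \in L$ established above, together with linearity in $z$, to conclude that each of the two summands lies in $L$, which finishes the proof.

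To prove the key lemma I would induct on the length of the word $z$. The base case $z = 1$ gives $\phi_c(1 \otimes 1) = c \in L$. For the inductive step, writing $\ad_{z_0 z} = \ad_{z_0} \circ \ad_z$ for a generator $z_0$ and expanding $\ad_{z_0}(u \otimes v) = z_0 u \otimes v - u \otimes v z_0$, a direct computation collapses the two middle insertions into a commutator:
$$
\phi_c\big(\ad_{z_0 z}(1 \otimes 1)\big) = \sum \big(v_i\, c\, z_0\, u_i - v_i\, z_0\, c\, u_i\big) = \phi_{[c,z_0]}\big(\ad_z(1 \otimes 1)\big),
$$
where $\ad_z(1 \otimes 1) = \sum_i u_i \otimes v_i$. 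Since $[c, z_0] \in L$, the right-hand side lies in $L$ by the inductive hypothesis applied to the shorter word $z$.

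The main obstacle is the key lemma, and specifically finding the right inductive quantity: a naive attempt to rewrite the fully expanded associative expression $\sum_i a''_i\, b\, a'_i$ directly as an iterated bracket is unwieldy. The clean move is to peel generators off $z$ one at a time, absorbing each into the bracket with the middle factor $c$, so that the length of $z$ decreases while the middle factor stays in $L$; this is exactly the reduction displayed above. Everything else is routine: the spanning statement for $F(L)$, the Leibniz expansion, and the linear extension of the key lemma in $z$ (applied here with $z = \partial^L_{x_0}(a)$ and $z = \partial^L_{x_0}(b)$, which are homogeneous linear combinations of words).
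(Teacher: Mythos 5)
Your proposal is correct and follows essentially the same route as the paper: both reduce to the two Leibniz summands $\partial_x^2(a)\,b\,\partial_x^1(a) + \partial_x^2(b)\,a\,\partial_x^1(b)$, invoke the identity $\partial^A_{x_0}(l) = \ad_{\partial^L_{x_0}(l)}(1\otimes 1)$, and then induct on the length of the word, peeling off one generator at a time and absorbing it into a bracket with the middle Lie factor (your $\phi_{[c,z_0]}$ step is exactly the paper's $\beta^2[-z,b]\beta^1$ computation). Your write-up is if anything slightly more careful, since you isolate the key lemma, state the base case explicitly, and note the linear extension in $z$.
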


\begin{proof}
We compute $\partial_x \tr(ab)$ where $a,b \in L$. By Lemma \ref{lem:somethingmadeoutofpartial} we have
\begin{align*}
    \partial_x \tr(ab) &= \partial_x^2(ab)\partial_x^1(ab)\\
    &= \partial_x^2(a)b \partial_x^1(a) + \partial_x^2(b)a\partial_x^1(b).
\end{align*}
By the above discussion we have that $\partial^A_x(a) = \ad_{\alpha} (1\otimes 1)$ for $\alpha = \partial^L_x(a)$ we show that $\partial_x^2(a)b \partial_x^2(a)$ lies in $L$ by induction on the length of $\alpha$. That is let $\alpha = z \beta$ for $z \in \{x,y\}$. We compute
\begin{align*}
    \partial_x^2(a)b \partial_x^1(a) &= \beta^2 [-z,b] \beta^1,
\end{align*}
where $\beta^1 \otimes \beta^2 = \ad_{\beta}(1 \otimes 1)$ and thus $\partial_x^2(a)b \partial_x^1(a)$ lies in L by induction. The same argument shows that $\partial_x^2(b)a\partial_x^1(b) \in L$.
\end{proof}





\begin{Def}
A linear map $L \to L$ is called a derivation if
$$
u([a,b]) = [u(a),\;b] + [a,\;u(b)] \quad \forall a,b \in L.
$$
The space of derivations is denoted by $\Der(L)$.
\end{Def}

By Proposition \ref{prop:derivuniqueness} we can uniquely extend any derivation $L \to L$ to a derivation $A \to A$, thus we will identify $\Der(L)$ with a subspace of $\Der(A)$ and we immediately obtain the following


\begin{Prop}
The maps in Proposition \ref{prop:derivuniqueness} restict to
$\Der(L) \cong L \times L.$
\end{Prop}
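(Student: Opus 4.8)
The plan is to use the isomorphism $\Der(A) \cong A \times A$, $u \mapsto (u(x),u(y))$, supplied by Proposition \ref{prop:derivuniqueness} with $M = A$, and to identify precisely which pairs $(u_x,u_y)$ correspond to derivations that preserve $L$. Since we have already agreed to regard $\Der(L)$ as the subspace of $\Der(A)$ consisting of those $u$ with $u(L) \subseteq L$, the task reduces to showing that this subspace is carried exactly onto $L \times L$ under the above map. I would prove the two inclusions separately.

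For the forward inclusion, suppose $u \in \Der(L)$, viewed inside $\Der(A)$ via its unique extension from Proposition \ref{prop:derivuniqueness}. By definition a derivation of $L$ maps $L$ into $L$, and since $x, y \in L$ we get $u(x), u(y) \in L$. Hence the image of $u$ in $A \times A$ lies in $L \times L$, and the extension does not change the values on generators, so no information is lost.

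For the reverse inclusion, suppose $u \in \Der(A)$ satisfies $u(x), u(y) \in L$; I want to show $u(L) \subseteq L$, i.e.\ that $u$ genuinely restricts to a derivation of $L$. I would argue by induction on the bracket-length of Lie monomials. Every element of $L$ is a linear combination of iterated brackets of the generators, so by linearity it suffices to treat bracket monomials. The base case $u(x), u(y) \in L$ holds by hypothesis. For the inductive step, if $a, b \in L$ with $u(a), u(b) \in L$ already established, then the Leibniz rule gives
$$
u([a,b]) = [u(a),b] + [a,u(b)],
$$
and since $L$ is a Lie subalgebra of $(A,[\,\cdot\,,\cdot\,])$ it is closed under the bracket, so $u([a,b]) \in L$. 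Thus $u(L) \subseteq L$, and $u \in \Der(L)$.

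Combining the two inclusions shows that the isomorphism of Proposition \ref{prop:derivuniqueness} restricts to a bijection $\Der(L) \cong L \times L$, which is visibly linear; this is the claim. I do not expect any genuine obstacle here: the only point requiring slight care is the bookkeeping around the identification of $\Der(L)$ with a subspace of $\Der(A)$ through the unique extension, and the observation that the associative bracket on $A$ restricts to the Lie bracket on $L$, so that the Leibniz computation above stays inside $L$.
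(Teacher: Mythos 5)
Your argument is correct and matches what the paper intends: the paper states this proposition with no proof, treating it as immediate from the identification of $\Der(L)$ with a subspace of $\Der(A)$ via Proposition \ref{prop:derivuniqueness}. Your write-up simply makes explicit the two inclusions (that $u(x),u(y)\in L$ for $u\in\Der(L)$, and conversely that the induced derivation preserves $L$ by induction on bracket-length using the Leibniz rule), which is exactly the intended reasoning.
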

%
%
%

\begin{Prop}
$\Der_\omega(L) \cong F(L).$
\end{Prop}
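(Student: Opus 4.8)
The plan is to mimic the associative case, where we already established $F^+(A) = F(A)/\tr(1) \cong \Der_\omega(A)$ via the map $\Phi_1$, and to show that this isomorphism restricts nicely to the Lie setting. The overall strategy is to build a map $\Phi_1^L : F(L) \to \Der(L)$ by the exact same formulas as before, namely $f \mapsto u_f$ with $u_f(x) = \partial_y(f)$ and $u_f(y) = -\partial_x(f)$, and then to verify three things: (a) that $u_f$ genuinely lands in $\Der(L)$ (not merely $\Der(A)$), (b) that its image lies in $\Der_\omega(L) = \Der_\omega(A) \cap \Der(L)$, and (c) that $\Phi_1^L$ is a bijection onto $\Der_\omega(L)$. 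Two ingredients from the preceding material do most of the work: Proposition \ref{prop:partialinL} guarantees that $\partial_{x_0}(\gamma) \in L$ for every $\gamma \in F(L)$, and the identification $\Der(L) \cong L \times L$ tells us a derivation of $A$ restricts to $L$ precisely when its values on the generators lie in $L$.

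First I would define $u_f$ on generators as above and observe, using Proposition \ref{prop:partialinL}, that $u_f(x) = \partial_y(f) \in L$ and $u_f(y) = -\partial_x(f) \in L$. Combined with the identification $\Der(L) \cong L \times L$, this immediately shows $u_f$ extends to a genuine derivation of $L$, establishing (a). For (b), since $f \in F(L)$ maps to an element of $F(A)$ under the injection $F(L) \hookrightarrow F(A)$, and since $\Phi_1$ on all of $F(A)$ already lands in $\Der_\omega(A)$, the derivation $u_f$ automatically kills $\omega = [x,y]$; thus its image sits inside $\Der_\omega(A) \cap \Der(L) = \Der_\omega(L)$. This step is essentially free once the compatibility of the two partial-derivative constructions is noted.

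The substantive content is the bijectivity in (c). For injectivity, I would reuse the argument of Lemma \ref{lem:inverseMap}: if $u_f = 0$ with $f$ homogeneous of degree $N$, then the reconstruction formula $f = \frac{1}{N}\Theta(y \otimes u_f(x) - x \otimes u_f(y))$ recovers $f$ from $u_f$, so $f = 0$ in $F(L)$ (note that in contrast to $F(A)$, there is no $\tr(1)$ to quotient out, since $1 \notin L$ and the degree is at least $1$). For surjectivity, given $u \in \Der_\omega(L) \subset \Der_\omega(A)$, the surjectivity already proved for $\Phi_1$ produces an $f = \frac1n\tr(u(y)x) \in F(A)$ with $\Phi_1(f) = u$; the remaining task is to show this preimage actually lies in $F(L)$, i.e.\ that $f = \frac{1}{N}\Theta(y \otimes u(x) - x \otimes u(y))$ is realized inside $F(L)$. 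Since $u(x), u(y) \in L$ by hypothesis, the element $y \otimes u(x) - x \otimes u(y)$ is a genuine element of $L \otimes L$, and applying $\Theta$ places its class in $F(L)$; one then checks via the same Euler-type computation (Lemma \ref{lem:euler}) that its image in $F(A)$ is the associative preimage $f$.

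I expect the main obstacle to be the surjectivity step, specifically verifying cleanly that the associative preimage of a Lie-valued symplectic derivation is itself Lie-valued, i.e.\ that the canonical map $F(L) \to F(A)$ hits exactly the class $f$. The delicate point is coherence between the two reconstruction formulas (the one via $\frac1n\tr(u(y)x)$ used for $\Der_\omega(A)$ and the symmetric one $\frac1N\Theta(y\otimes u(x) - x \otimes u(y))$ natural for $F(L)$); these agree in $F(A)$ by the Euler identities, and injectivity of $F(L)\to F(A)$ then forces the $F(L)$-class to be well defined and to map correctly. Once that coherence is in place, the isomorphism follows formally.
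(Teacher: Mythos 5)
Your proposal is correct and follows essentially the same route as the paper: restrict the already-established isomorphism $\Phi_1$ between $F(A)/\tr(1)$ and $\Der_\omega(A)$, use Proposition \ref{prop:partialinL} to see that the forward map lands in $\Der(L)$, and observe that the inverse formula $u \mapsto \frac{1}{n}\tr(y\,u(x) - x\,u(y))$ produces an element of $F(L)$ because $u(x), u(y) \in L$. Your additional attention to the coherence of the two reconstruction formulas via the Euler identities and the injectivity of $F(L) \to F(A)$ is a reasonable elaboration of a point the paper leaves implicit, but it is not a different argument.
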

\begin{proof}
Since $\Der_\omega(L) \subset \Der_\omega(A)$ and $F(L) \subset F(A),$ we only need to show that our original maps between $\Der_\omega(A)$ and $F(A)$ preserve the respective subsets. By Proposition \ref{prop:partialinL} we have that the map $F(L) \to \Der(A)$ actually lands in $\Der(L)$. The inverse to this map as constructed in the proof of \ref{lem:inverseMap} is given by $u \to \frac{1}{n}\tr(y u(x) - x u(y) )$ and thus lies in the subspace $F(L) \subset F(A)$.



\end{proof}

\section{The elliptic Kashiwara-Vergne Lie algebra}
In this section we recall the definition of the elliptic Kashiwara-Vergne Lie algebra and its basic properties.

\begin{Def}
The \emph{divergence} map $\op{div} : Der(L) \to F(A)$ is defined by the formula
$$
\op{div}(u) = \text{tr}(\partial_x^L(u(x)) + \partial_y^L (u(y))).
$$
\end{Def}

We define the action of $\Der(A)$ on $\tr(A)$ by $u.\tr(a) = \tr(u(a))$.

\begin{Lem}\label{lem:cocycle}
The divergence is a 1-cocycle, that is it satisfies
\begin{align*}
    div([u,v]) = u.div(v) - v.div(u)
\end{align*}
\end{Lem}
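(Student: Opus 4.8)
The plan is to expand both sides using $[u,v] = u\circ v - v\circ u$ and reduce everything to a single \emph{chain rule} describing how the Lie partials $\partial_{x_0}^L$ interact with a derivation. Writing $[u,v](x) = u(v(x)) - v(u(x))$ and likewise for $y$, the left-hand side $\dv([u,v])$ becomes a trace of a sum of terms $\partial_{x_0}^L(u(v(x_0)))$ and $\partial_{x_0}^L(v(u(x_0)))$ for $x_0 \in \{x,y\}$, so the whole computation hinges on understanding $\partial_{x_0}^L(u(w))$ for a derivation $u$ and an arbitrary $w \in L$.

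First I would establish the chain rule
$$
\partial_{x_0}^L(u(w)) = u(\partial_{x_0}^L(w)) + \partial_x^L(w)\,\partial_{x_0}^L(u(x)) + \partial_y^L(w)\,\partial_{x_0}^L(u(y)), \qquad (\star)
$$
valid for all $w \in L$. To prove $(\star)$, set $E_{x_0}(w) := \partial_{x_0}^L(u(w)) - u(\partial_{x_0}^L(w))$ and compute $E_{x_0}([l,m])$ using the Leibniz identity for $\partial_{x_0}^L$ together with the derivation property of $u$ on $A$. The two occurrences of $u(l)\,\partial_{x_0}^L(m)$ and $u(m)\,\partial_{x_0}^L(l)$ cancel, leaving $E_{x_0}([l,m]) = l\,E_{x_0}(m) - m\,E_{x_0}(l)$; that is, $E_{x_0}$ obeys the \emph{same} Leibniz identity as $\partial_{x_0}^L$. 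Since any such map into $A$ is determined by its values on the generators (by induction on bracket length), and since $\partial_{x_0}^L(z)$ is a constant, hence killed by $u$, for $z \in \{x,y\}$, we get $E_{x_0}(z) = \partial_{x_0}^L(u(z))$. Comparing with the map $w \mapsto \sum_{z}\partial_z^L(w)\,\partial_{x_0}^L(u(z))$, which is readily checked to satisfy the same Leibniz identity and to agree on generators, yields $(\star)$.

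Next I would substitute $(\star)$ into $\dv([u,v]) = \tr\big(\partial_x^L([u,v](x)) + \partial_y^L([u,v](y))\big)$. The terms $u(\partial_{x_0}^L(v(x_0)))$ assemble, after applying $\tr$, into $u.\dv(v)$, and the terms $v(\partial_{x_0}^L(u(x_0)))$ into $v.\dv(u)$, producing exactly the desired right-hand side $u.\dv(v) - v.\dv(u)$. It then remains to show that the remaining bilinear correction terms, namely $\sum_{x_0,z}\partial_z^L(v(x_0))\,\partial_{x_0}^L(u(z))$ minus the same expression with $u$ and $v$ interchanged, vanish after applying $\tr$. This follows from cyclicity $\tr(PQ)=\tr(QP)$ combined with relabeling the two summation indices $x_0 \leftrightarrow z$: applying these two operations in turn carries the first sum onto the second, so their difference is $0$.

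The main obstacle is establishing $(\star)$ cleanly and tracking all signs, particularly confirming that the correction terms assemble into a cyclically symmetric (hence trace-exact) expression. Once $(\star)$ is available, the cancellation of the bilinear part is a formal consequence of the cyclicity of $\tr$ and requires no further structural input, while the matching of the linear part with $u.\dv(v) - v.\dv(u)$ is immediate from the definitions of $\dv$ and of the action $u.\tr(a) = \tr(u(a))$.
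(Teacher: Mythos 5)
Your proof is correct, and it is worth noting that the paper itself does not prove this lemma at all --- it simply cites \cite{AKKN2}, Proposition~3.1 --- so your argument supplies a self-contained proof where the paper defers to a reference. The key identity $(\star)$ is genuinely the right one: it can be seen either by your Leibniz-induction argument (which checks out: $E_{x_0}$ and $w \mapsto \sum_z \partial_z^L(w)\,\partial_{x_0}^L(u(z))$ both satisfy $f([l,m]) = l\,f(m) - m\,f(l)$ and agree on generators, using that $\partial_{x_0}^L(z)$ is a constant and hence killed by $u$) or, slightly more directly, by applying $u$ to the decomposition $w = \partial_x^L(w)\,x + \partial_y^L(w)\,y$ in $A$ and reading off the coefficient of the last letter. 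The subsequent bookkeeping is also right: the terms $u(\partial_{x_0}^L(v(x_0)))$ sum to $u.\dv(v)$ under $\tr$, and the bilinear remainders $\sum_{x_0,z}\tr\bigl(\partial_z^L(v(x_0))\,\partial_{x_0}^L(u(z))\bigr)$ and its $u \leftrightarrow v$ counterpart coincide after swapping the summation labels and using cyclicity of the trace, so they cancel in the difference. This is essentially the standard cocycle computation for the noncommutative divergence (as in Alekseev--Torossian and \cite{AKKN2}), and your version would make the paper more self-contained.
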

\begin{proof}
see \cite{AKKN2} Proposition 3.1 for a proof.
\end{proof}

\begin{Def}The \emph{elliptic Kashiwara-Vergne Lie algebra} is given by$$\mathfrak{krv} = \mathfrak{krv}^{(1,1)} = \{u \in tder(1,1) = \Der(L(x, y)) \ |\ u([x, y]) = 0, \dv(u) = 0\}.$$
\end{Def}

\begin{Rem}
Since we are restricted to only symplectic derivations, we have that graphically, the operation $\dv$ takes a tree to a trace. We observe that the process of taking a tree to a trace can be described graphically as follows: We obtain $u(x)$ by marking each $y$ and reading the resulting rooted trees, while $\partial_x^L$ marks each $x$ on the rooted tree. Then, we have a Lie tree with two markings. We can view this tree as an arrow with Lie terms on the arrow. We expand the Lie terms on the arrow to complete the $\partial_x^L$ operation, and finally connect the two ends of the arrow to form a trace.
\end{Rem}


\begin{Prop}
$\mathfrak{krv}$ is a Lie algebra.
\end{Prop}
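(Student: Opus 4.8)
The plan is to recognize $\mathfrak{krv}$ as a Lie subalgebra of $\Der(L)$ equipped with the commutator bracket $[u,v] = u \circ v - v \circ u$, under which $\Der(L)$ is well known to be a Lie algebra (the commutator of two derivations is again a derivation, as one checks by expanding the Leibniz rule twice). Since the two defining conditions $u([x,y]) = 0$ and $\dv(u) = 0$ are each linear in $u$, the set $\mathfrak{krv}$ is visibly a linear subspace; the only substantive point is that it is closed under the bracket. I would therefore reduce the statement to checking that, for $u,v \in \mathfrak{krv}$, the derivation $[u,v]$ again satisfies both defining conditions.

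For the symplectic condition, write $\omega = [x,y]$. Since both $u$ and $v$ annihilate $\omega$, I would compute directly
$$
[u,v](\omega) = u(v(\omega)) - v(u(\omega)) = u(0) - v(0) = 0,
$$
so that $[u,v] \in \Der_\omega(L)$. Implicitly this uses that $[u,v]$ is itself a derivation of $L$, which is exactly the standard verification that $\Der(L)$ is closed under the commutator bracket.

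For the divergence condition, the key input is the cocycle property established in Lemma \ref{lem:cocycle}, namely $\dv([u,v]) = u.\dv(v) - v.\dv(u)$. Since $\dv(u) = 0$ and $\dv(v) = 0$ by hypothesis, both terms on the right-hand side vanish, and hence $\dv([u,v]) = 0$. Combining the two computations shows $[u,v] \in \mathfrak{krv}$, which completes the argument.

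I expect no serious obstacle here, since all the difficulty has been front-loaded into Lemma \ref{lem:cocycle}, whose proof is cited to \cite{AKKN2}. The only points requiring genuine care are to ensure that the bracket $[u,v]$ is understood as the commutator of derivations (so that the cocycle identity applies verbatim) and that the action of $\Der(A)$ on $\tr$ appearing in the cocycle lemma is the same $u.\tr(a) = \tr(u(a))$ used throughout; once these are fixed, the two conditions close off immediately.
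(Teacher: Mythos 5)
Your proposal is correct and follows essentially the same route as the paper: the divergence condition on $[u,v]$ is closed off by the cocycle identity of Lemma \ref{lem:cocycle}, exactly as in the paper's proof. You additionally spell out the (easy) closure of the symplectic condition $[u,v]([x,y])=0$, which the paper leaves implicit.
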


\begin{proof}
%
%

Let $u, v \in \mathfrak{krv}$, thus we have that $\dv(u) = \dv(v) = 0$. Then by Lemma \ref{lem:cocycle} we obtain
\begin{align*}
    \dv([u,v]) &= u( \dv(v)) - v( \dv(u)) = 0.
\end{align*}
\end{proof}

\begin{Prop}
$\mathfrak{krv}$ is bigraded, that is $\mathfrak{krv} = \bigoplus_{i\geq 0, j \geq 0} \mathfrak{krv}^{(i,j)}$, where $\mathfrak{krv}^{(i,j)}$ denotes the set of derivations in $\mathfrak{krv}$ whose corresponding trees have $i$ $x$'s and $j$ $y$'s.
\end{Prop}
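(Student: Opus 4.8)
The plan is to realize $\mathfrak{krv}$ as the kernel of a single linear map that is homogeneous with respect to the bigrading, and then invoke the elementary fact that the kernel of a graded linear map between graded vector spaces is itself graded. First I would record the bigradings that all relevant spaces carry. The free associative algebra is bigraded by letter content, $A = \bigoplus_{p,q} A^{(p,q)}$ with $A^{(p,q)}$ spanned by monomials having exactly $p$ letters $x$ and $q$ letters $y$; this induces bigradings on $A\otimes A$, on $\tr$ and $F(A)$, and by restriction on $L$ and $F(L)$. Since the multiplication of $A$ and the bracket of $L$ are operations of bidegree $(0,0)$, every linear endomorphism decomposes into bihomogeneous pieces, and the bihomogeneous components of a derivation are again derivations.

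Next I would fix the bigrading convention for derivations: as in the introduction, $u$ has bidegree $(i,j)$ if it raises the $x$- and $y$-degree by $i-1$ and $j-1$ respectively, i.e. $u(L^{(p,q)}) \subseteq L^{(p+i-1,\,q+j-1)}$. Under the correspondence $\Der_\omega(L)\cong F(L)$ this matches the tree with $i$ letters $x$ and $j$ letters $y$, because $u_f(x) = \partial_y^{F(A)}(f)$ and $u_f(y) = -\partial_x^{F(A)}(f)$ lower the appropriate degree by one. Thus $\Der(L) = \bigoplus_{i,j}\Der(L)^{(i,j)}$, and any $u$ decomposes as $u = \sum_{i,j} u^{(i,j)}$ with $u^{(i,j)}\in\Der(L)^{(i,j)}$.

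The key step is to check that the two conditions cutting out $\mathfrak{krv}$ are homogeneous. Consider
$$
\Phi:\ \Der(L)\longrightarrow A\oplus F(A),\qquad u\longmapsto \big(u([x,y]),\ \dv(u)\big),
$$
so that $\mathfrak{krv} = \ker\Phi$. For $u$ of bidegree $(i,j)$ I would compute the bidegrees of the two outputs: since $[x,y]\in A^{(1,1)}$, the element $u([x,y])$ lies in $A^{(i,j)}$; and since $\dv(u) = \tr(\partial_x^L(u(x)) + \partial_y^L(u(y)))$ with $u(x)\in L^{(i,\,j-1)}$, $u(y)\in L^{(i-1,\,j)}$, and each partial removing one occurrence of its variable, $\dv(u)$ lies in the bidegree-$(i-1,\,j-1)$ part of $F(A)$. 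Hence $\Phi$ maps $\Der(L)^{(i,j)}$ into the fixed bihomogeneous summand $A^{(i,j)}\oplus F(A)^{(i-1,j-1)}$ of the target; in particular $\Phi$ is graded, the degree shift being an injective function of $(i,j)$.

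Finally I would conclude. Writing $u = \sum_{i,j}u^{(i,j)}$, the images $\Phi(u^{(i,j)})$ lie in pairwise distinct bihomogeneous summands of $A\oplus F(A)$, which form a direct sum; therefore $\Phi(u) = 0$ if and only if $\Phi(u^{(i,j)}) = 0$ for every $(i,j)$. Setting $\mathfrak{krv}^{(i,j)} = \mathfrak{krv}\cap\Der(L)^{(i,j)}$ then yields $\mathfrak{krv} = \bigoplus_{i,j}\mathfrak{krv}^{(i,j)}$. The only genuinely delicate points are bookkeeping rather than conceptual: confirming that $\partial_x^L$, $\partial_y^L$ and $\tr$ shift bidegree by exactly the claimed amounts (immediate from their definitions on monomials), and checking that the bihomogeneous components of a symplectic derivation remain symplectic, which holds precisely because $[x,y]$ is bihomogeneous so that $u([x,y])=0$ already decouples across degrees.
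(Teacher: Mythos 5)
Your proof is correct and follows essentially the same route as the paper, whose entire argument is the one-line observation that both $u\mapsto u([x,y])$ and $u\mapsto \dv(u)$ preserve the degree decomposition; you have simply made the degree bookkeeping and the kernel-of-a-graded-map conclusion explicit. No discrepancies.
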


\begin{proof}
Both operations div and $u([x,y])$ preserve the degree decomposition. 
\end{proof}

\section{Small elements in $\mathfrak{krv}$}
In this section we are studying low-degree elements in $\mathfrak{krv}$. To that purpose let us first recall the graphical interpretation of the Jacobi identity.
\begin{Def}[IHX]
The IHX relation is the graphical representation of the Jacobi identity.

$$[a,[b,c]] \ \ \ \ \ \ +  \ \ \ \ \ \ [c,[a,b]] \ \ \ \ \ \ + \ \ \ \ \ \ [b,[c,a]] \ \ \ \ \ \ = \ \ \ \ 0.$$

\begin{figure}[H]
    \centering
    \includegraphics[width=0.9\textwidth]{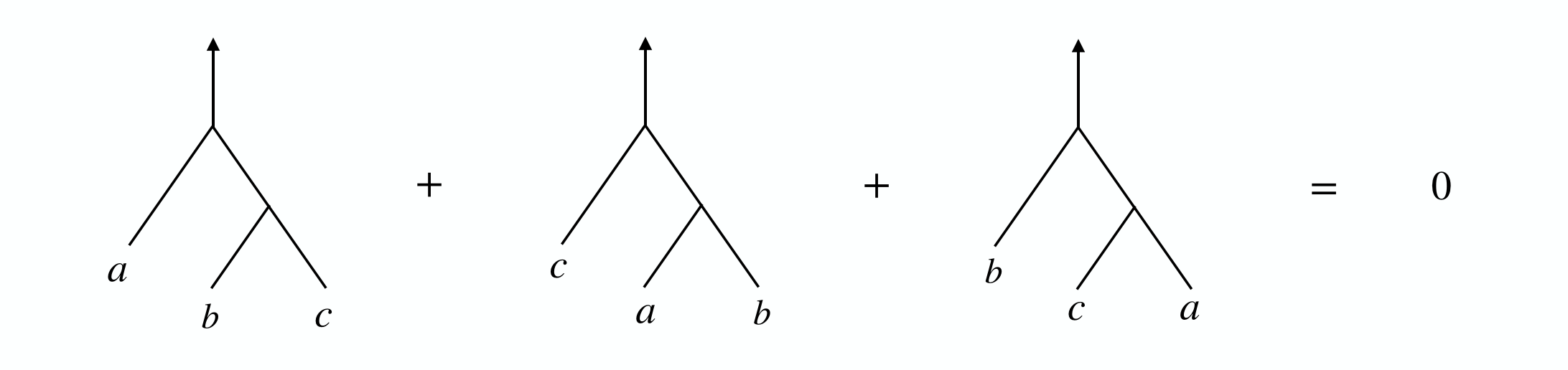}
    \label{fig:ihx1}
\end{figure}

Fixing the positions of $a, b, c$ and the root, we have

\begin{figure}[H]
    \centering
    \includegraphics[width=0.9\textwidth]{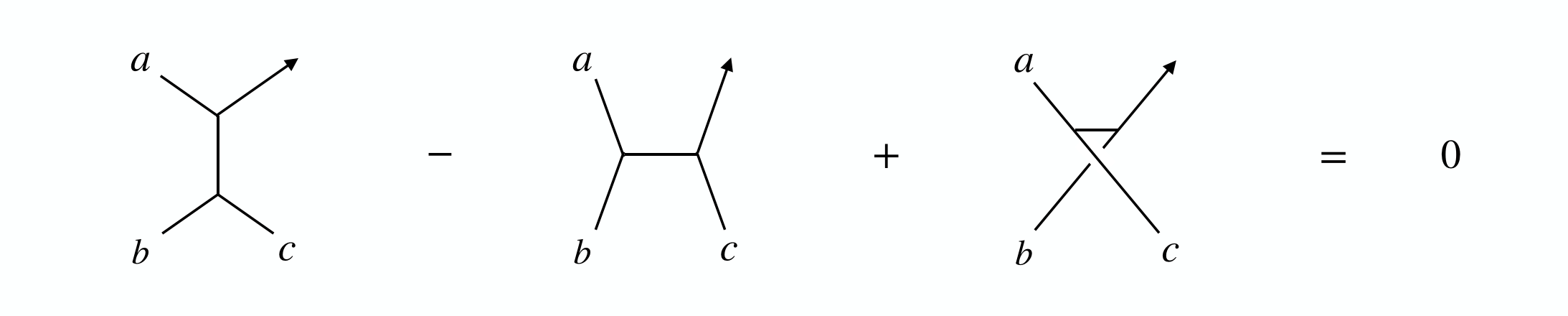}
    \label{fig:ihx2}
\end{figure}
\end{Def}

\begin{Lem}[Birds on a Wire]
Given two points $x_1, x_2$ on a Lie tree, it is always possible to shuffle the tree into the form $\Theta(x_1, \ad_a(x_2))$ for some $a\in A.$ We call this the standard form of representing a tree with two ordered marked roots.
\end{Lem}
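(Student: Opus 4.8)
The plan is to reduce the statement to a single-marking normal form. Since $x_1$ is a leaf of the tree $\gamma$, it has a unique incident edge; cutting that edge exhibits $\gamma$ as the gluing of the one-node tree $x_1$ with the remaining rooted tree $R \in L$ (rooted at the former neighbour of $x_1$), so that $\gamma = \Theta(x_1, R)$ already holds in $F(L)$, with the marked leaf $x_2$ occurring in $R$ exactly once. Equivalently, if $\gamma$ is presented as $\Theta(p,q)$ with $x_1$ buried inside $p$, I would strip the outer brackets off $p$ one at a time using the defining relation $[a,b]\otimes c = a \otimes [b,c]$ of $F(L)$ together with the antisymmetry $[a,b] = -[b,a]$ to keep $x_1$ in the shrinking factor, until the first slot is the single leaf $x_1$. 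Either way one is left to show that the Lie monomial $R$ can be written as $\ad_a(x_2)$ for some $a \in A$.

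This second step is exactly the shuffling already used in the alternative description of $\partial^L$, and I would prove it by induction on the number of leaves of $R$. If $R = x_2$ we take $a = 1$. Otherwise $R = [R_1, R_2]$ and the marked leaf lies in exactly one factor; using $[R_1,R_2] = -[R_2,R_1]$ we may assume it lies in $R_2$, absorbing the resulting sign into $a$ at the end. By induction $R_2 = \ad_b(x_2)$, and since the adjoint action of a Lie element is bracketing, $\ad_{R_1}(\cdot) = [R_1,\cdot]$, together with $\ad_{cd} = \ad_c \ad_d$, we obtain $R = [R_1, \ad_b(x_2)] = \ad_{R_1 b}(x_2)$. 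As $R_1 \in L \subset A$, the element $a = R_1 b$ genuinely lies in the associative algebra $A$, which closes the induction. Combining the two steps yields $\gamma = \Theta(x_1, \ad_a(x_2))$.

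Graphically, $a$ is the ordered product $a = t_1 t_2 \cdots t_k$ of the subtrees $t_i \in L$ hanging off the internal vertices along the path from $x_1$ to $x_2$ (the ``birds'' on the ``wire''), and $\ad_a(x_2) = [t_1,[t_2,\dots,[t_k,x_2]\dots]]$. I expect the only delicate points to be bookkeeping rather than conceptual: tracking the signs introduced by the antisymmetry relation, and verifying that $a$ is a legitimate element of $A = \mathbb{R}\langle x,y\rangle$. The latter rests on the facts that each hanging subtree lies in $L \subset A$, that the adjoint action of $A$ restricts on $L$ to $[\,\cdot\,,\,\cdot\,]$, and that $\ad$ converts associative multiplication into composition, all of which are available from the preceding sections.
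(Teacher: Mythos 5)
Your proof is correct and follows essentially the same route as the paper's: the path from $x_1$ to $x_2$ is the wire, the hanging subtrees are the birds, and the Jacobi identity is what lets you absorb them into a single $\ad_a$. The difference is one of presentation. The paper argues graphically, using IHX moves to flatten each hanging subtree level by level until only single generators sit on the wire, so that $a$ emerges as a sum of monomials in $x$ and $y$; you instead keep each hanging subtree $R_1$ intact as a Lie element and invoke $\ad_{R_1}(\cdot)=[R_1,\cdot]$, which packages all of those IHX moves into one identity. Since the paper only defines $\ad$ recursively on monomials of $A$, you should say explicitly that $\ad_{R_1}(m)=[R_1,m]$ for $R_1\in L$ follows from the Jacobi identity by induction on the bracket structure of $R_1$ --- but that is precisely the content of the paper's IHX manipulations, so the two arguments coincide at their core. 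Your explicit two-step reduction (first peeling $\gamma$ down to $\Theta(x_1,R)$ via the defining relations of $F(L)$ and antisymmetry, then inducting on the number of leaves of $R$) is, if anything, tighter and easier to verify than the paper's picture-based version.
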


\begin{proof}
We always have path connecting $x_1$ and $x_2$. Mark $x_1$ as the head of an arrow, and $x_2$ as the tail of the arrow.  We straighten the path, and position it vertically with $x_1$ on the top. We will have trees growing from the arrow. Using IHX relations, we can reduce any tree by a level unless it only has one layer.

\begin{figure}[H]
    \centering
    \includegraphics[width=0.9\textwidth]{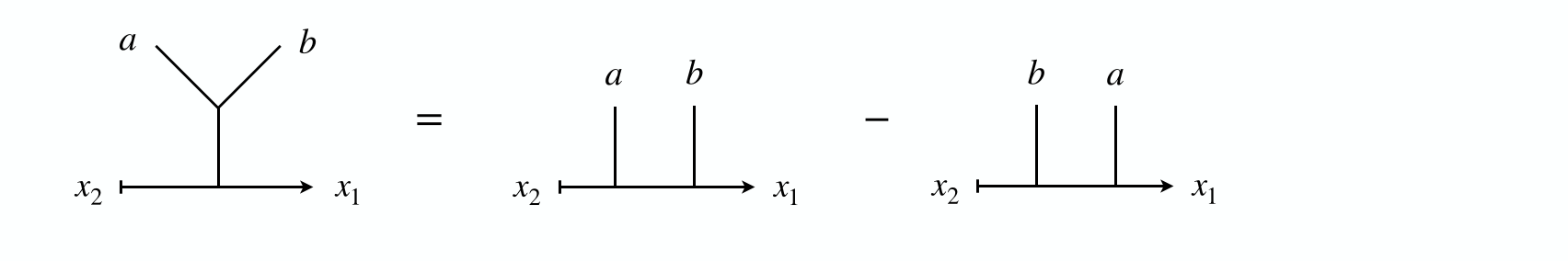}
    \label{fig:ihx3}
\end{figure}

We can also switch a tree from the right side to the left using antisymmetry. Then, we can always reduce the tree to the sum of elements with $x_2$ on the bottom, $x_1$ on top, and individual symbols $a_1, a_2, \ldots a_n$ on the left of the arrow. These elements are then of the desired form  $\Theta(x_1, \ad_a(x_2)).$
\end{proof}

\begin{Lem}
\label{lem:dihed}
For $\Gamma \in F(L)$ with $u_\Gamma \in \Der(L)$ the corresponding derivation, we have
$$
\dv(u_\Gamma) = \tr(g - g^*),
$$
where $g = \partial^L_x \partial^{F(L)}_y (\Gamma)$ and $g \to g^*$ is reversing order and multiplying each symbol by $-1$.
\end{Lem}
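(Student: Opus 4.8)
The plan is to unwind the two definitions in play and reduce the statement to a single symmetry identity that can then be read off the graphical calculus. By definition of $u_\Gamma = \Phi_1(\Gamma)$ we have $u_\Gamma(x) = \partial_y^{F(L)}(\Gamma)$ and $u_\Gamma(y) = -\partial_x^{F(L)}(\Gamma)$, so the definition of the divergence gives
$$
\dv(u_\Gamma) = \tr\bigl(\partial_x^L(u_\Gamma(x)) + \partial_y^L(u_\Gamma(y))\bigr) = \tr(g) - \tr(h),
$$
where $g = \partial_x^L \partial_y^{F(L)}(\Gamma)$ and $h = \partial_y^L \partial_x^{F(L)}(\Gamma)$. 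Thus the lemma is equivalent to the assertion $\tr(h) = \tr(g^*)$, i.e. that interchanging the two partials amounts, under the trace, to the reversal-and-sign operation $*$.

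Next I would prove $\tr(h) = \tr(g^*)$ using \emph{Birds on a Wire}. Both $g$ and $h$ are sums over the choice of one $x$-leaf $\hat x$ and one $y$-leaf $\hat y$ of $\Gamma$: to compute $g$ one first roots $\Gamma$ at $\hat y$ (this is $\partial_y^{F(L)}$) and then reads off, via $\partial_x^L$, the word in the standard form $\ad_g(\hat x)$; to compute $h$ one roots at $\hat x$ and reads off $\ad_h(\hat y)$. Since the pair $\{\hat x,\hat y\}$ ranges over the same set in both cases, it suffices to fix such a pair and compare the two standard forms of the (single) doubly-marked tree. By Birds on a Wire this tree equals both $\Theta(\hat y, \ad_g(\hat x))$ and $\Theta(\hat x, \ad_h(\hat y))$, where in each form all hanging subtrees have been combed into single symbols, so that the content reduces to comparing these two normal forms.

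The heart of the argument is then the purely algebraic identity
$$
\Theta(\hat y, \ad_a(\hat x)) = \Theta(\hat x, \ad_{a^*}(\hat y)) \quad \text{for all } a \in A,
$$
from which $h = a^* = g^*$ on each marked pair and hence $\tr(h) = \tr(g^*)$. I would prove it by writing $a = z_1 \cdots z_k$ with $z_i \in \{x,y\}$, pushing all the brackets of $\ad_a(\hat x) = [z_1,[z_2,\cdots[z_k,\hat x]\cdots]]$ into the left-hand slot by repeated use of the defining relation $\Theta(p,[q,r]) = \Theta([p,q],r)$ of $F(L)$, and then re-nesting the resulting left-combed bracket $[\cdots[[\hat x,z_1],z_2]\cdots,z_k]$ back into the form $\ad_b(\hat x)$ by repeated antisymmetry $[p,q] = -[q,p]$; an induction on $k$ yields $b = (-1)^k z_k \cdots z_1 = a^*$, and the symmetry $\Theta(p,q)=\Theta(q,p)$ closes the identification. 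The main obstacle is exactly this sign bookkeeping: one must check that each application of antisymmetry contributes precisely one factor of $-1$ per symbol while the order genuinely reverses, so that the cumulative effect is the anti-automorphism $*$ and not a bare reversal. Passing to $\tr$ at the end absorbs any residual cyclic ambiguity left by the choice of standard form.
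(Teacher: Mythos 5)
Your proof is correct and takes essentially the same route as the paper's: both reduce $\dv(u_\Gamma)$ to the comparison of $\partial_x^L\partial_y^{F(L)}(\Gamma)$ with $\partial_y^L\partial_x^{F(L)}(\Gamma)$ and then invoke birds-on-a-wire together with antisymmetry to show that swapping the two markings costs a reversal of order and a sign per symbol. Your explicit identity $\Theta(\hat y,\ad_a(\hat x)) = \Theta(\hat x,\ad_{a^*}(\hat y))$, proved by induction from the defining relations of $F(L)$, is simply the algebraic form of the paper's graphical arrow-reversal argument.
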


\begin{proof}
We have that
\begin{align*}
\dv(u_{\Gamma}) &= \tr(\partial_x^L(u(x)) + \partial_y^L(u(y))) \\
&= \tr(\partial_x^L(\partial_y^{F(L)}(\Gamma)) - \partial_y^L(\partial_x^{F(L)}(\Gamma))).
\end{align*}
Graphically, to obtain $\partial_x^L(\partial_y^{F(L)}(u))$, we first mark a $y$ to option a rooted Lie tree. Then we mark each $x$. We construct an arrow starting at the marked $y$ and ending at the $x.$ We showed by the birds-on-a-wire lemma that we can always reshuffle components on the arrow into the standard form, and read the symbols as terms in $A$. To obtain $\partial_y^L(\partial_x^{F(L)}(u)),$ we perform a very similar process, but switch the order of marking the $x$ and $y$. Then, after we do the same shuffling, we obtain similar results, with the arrow pointing in the different direction. The individual symbols on the arrow now lie on the other side of the arrow and in the opposite order. To move these symbols to the standard form, we use antisymmetry and multiply by $-1$, and they stay in the opposite order. Then, if the original partial we obtain is $a\in A$, our new partial must be $a^*.$
\end{proof}

For small degree it turns out that in the odd case the divergence condition is automatic. More precisely, we have
\begin{Lem}[Small Wheels]
Let $\Gamma \in F(L)$ be of even total degree and of $x$-degree at most 3. Then
$$
\op{div} (\Gamma) = 0.
$$
\end{Lem}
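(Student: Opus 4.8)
The plan is to run everything through the explicit divergence formula of Lemma \ref{lem:dihed} and then cash in the hypothesis that the $x$-degree is small. Write $\Gamma$ with $i$ symbols $x$ and $j$ symbols $y$, so that by assumption $i \le 3$ and $i+j$ is even. Lemma \ref{lem:dihed} gives
$$
\dv(u_\Gamma) = \tr(g - g^*), \qquad g = \partial_x^L \partial_y^{F(L)}(\Gamma),
$$
where $g \mapsto g^*$ reverses the order of the symbols of $g$ and multiplies each by $-1$. Since $\partial_y^{F(L)}$ lowers the $y$-degree by one and $\partial_x^L$ lowers the $x$-degree by one, the element $g \in A$ is a linear combination of monomials, each of $x$-degree $i-1 \le 2$ and of total length $n = i + j - 2$.

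Next I would record the sign. On a monomial of length $n$ the operation $g \mapsto g^*$ is exactly $(-1)^n$ times the plain reversal $g \mapsto \overline{g}$. Because $i+j$ is even, $n = i+j-2$ is even and $(-1)^n = 1$, so $g^* = \overline{g}$. Hence it suffices to prove $\tr(g) = \tr(\overline{g})$, i.e.\ that $g$ and its reversal agree as cyclic words; the parity hypothesis enters \emph{only} here, to convert the signed reversal into the plain one.

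The key observation --- the content of the name ``small wheels'' --- is that a cyclic word in which the letter $x$ occurs at most twice is invariant under reversal, and this I would verify monomial by monomial according to the number of $x$'s. If a monomial contains no $x$ it is a power $y^m$, which is its own reverse. If it contains exactly one $x$, then cyclically $\tr(y^a x y^b) = \tr(x y^{a+b})$, and its reverse $y^b x y^a$ also satisfies $\tr(y^b x y^a) = \tr(x y^{a+b})$. If it contains exactly two $x$'s, cyclic rotation sends $y^a x y^b x y^c$ to $\tr(x y^b x y^{a+c})$, while the reverse $y^c x y^b x y^a$ goes to the same $\tr(x y^b x y^{a+c})$; equivalently, the two gaps between consecutive $x$'s around the circle form the unordered pair $\{b, a+c\}$ in both cases, so the necklaces coincide. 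Since reversal is linear and every monomial appearing in $g$ has $x$-degree at most $2$, applying this termwise yields $\tr(\overline{g}) = \tr(g)$, and therefore
$$
\dv(u_\Gamma) = \tr(g) - \tr(g^*) = \tr(g) - \tr(\overline{g}) = 0.
$$

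The only genuinely new ingredient is the reversal-invariance of necklaces with at most two $x$'s; everything else is bookkeeping with the degrees of the two partial operators. The bound $i \le 3$ is used exactly to guarantee $g$ has $x$-degree at most $2$, and it is sharp, since with three $x$'s the ordered triple of gaps can differ from its reverse. The point to handle with care is that $g$ is in general a sum of several monomials, so the reversal identity must be applied termwise rather than to a single word; I expect this to be the only mildly delicate step, and it is routine once $g$ is expanded into homogeneous monomials.
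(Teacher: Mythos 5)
Your proof is correct and follows essentially the same route as the paper's: apply Lemma \ref{lem:dihed}, use the even total degree to turn the signed reversal $g \mapsto g^*$ into the plain reversal, and observe that a cyclic word with at most two $x$'s is reversal-invariant. You simply spell out the necklace check monomial by monomial, which the paper leaves implicit.
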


\begin{proof}
The divergence of a derivation has total degree 2 less than that of the derivation. Then, if the derivation is from a tree of even total degree, we have $tr(g^*)$ is $\tr(g)$ in the opposite order with the same sign. Moreover $g$ contains at most 2 $x$'s, and in that case it is symmetric with respect to reversing order.
\end{proof}

%

\subsection{Elements of weight 2}
By the birds-on-a-wire lemma any element of $x$-degree 2 is one of
$$
\delta_{2n} := \Theta(x, \ad_{y^{2n}}(x)),
$$
as any odd degree element is zero by
$$
\Theta(x, \ad_{y^{2n+1}}(x)) = (-1)^{2n+1}\Theta(x, \ad_{y^{2n+1}}(x)) =  (-1)^{2n+1}\Theta(\ad_{y^{2n+1}}(x), x ).
$$
The $\delta_{2n}$ are moreover linearly independent as can be seen by computing
$$
u_{\delta_{2n}}(y) = -\partial_x^{F(L)}(\delta_{2n}) = -(\ad_{y^{2n}}(x) + \ad_{(-y)^{2n}}(x)) = -2 \ad_{y^{2n}}(x).
$$
We have thus shown
\begin{Prop}
The elements $\delta_{2n}$ form a basis for elements of weight 2.
\end{Prop}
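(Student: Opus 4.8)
The plan is to prove the statement by separating it into three essentially independent pieces---spanning, membership in $\mathfrak{krv}$, and linear independence---each of which I can reduce to a result already established in the excerpt.

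First I would show that the $\delta_{2n}$ span the weight-$2$ part. A weight-$2$ element is an element $\Gamma \in F(L)$ whose tree carries exactly two $x$-leaves, all remaining leaves being labelled $y$. Marking the two $x$-leaves as the ordered endpoints of a wire and applying the Birds-on-a-Wire lemma puts $\Gamma$ into the standard form $\Theta(x, \ad_a(x))$; since the only symbols available to sit on the wire are the $y$'s, the word $a$ must be $y^m$, where $m$ is the $y$-degree of $\Gamma$. Hence every weight-$2$ element is a scalar multiple of $\delta_m := \Theta(x, \ad_{y^m}(x))$. The dihedral symmetry of such a wire---reflecting it interchanges the two $x$-leaves and reverses the string of $y$'s, producing the sign $(-1)^m$ via the defining relations of $F(L)$---yields $\delta_m = (-1)^m \delta_m$, so $\delta_m = 0$ whenever $m$ is odd. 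Thus the weight-$2$ part of $F(L)$ is spanned by $\delta_j$ for $j$ even and vanishes for $j$ odd.

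Next I would verify that each $\delta_{2n}$ genuinely lies in $\mathfrak{krv}$. Being the image of an element of $F(L)$, the derivation $u_{\delta_{2n}}$ automatically satisfies $u([x,y]) = 0$ under the identification $\Der_\omega(L) \cong F(L)$, so the only remaining condition to check is $\op{div}(u_{\delta_{2n}}) = 0$. This is precisely what the Small Wheels lemma delivers: $\delta_{2n}$ has $x$-degree $2 \le 3$ and even total degree $2 + 2n$, so its divergence vanishes. Consequently the weight-$2$ part of $\mathfrak{krv}$ coincides with the entire weight-$2$ part of $\Der_\omega(L)$, namely the span of the $\delta_{2n}$; no divergence-free condition is lost. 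For linear independence I would read off each derivation on the generator $y$: a direct computation of the partial gives $u_{\delta_{2n}}(y) = -\partial_x^{F(L)}(\delta_{2n}) = -2\,\ad_{y^{2n}}(x)$, and the elements $\ad_{y^{2n}}(x) \in L$ are visibly independent for distinct $n$ (they sit in different degrees), so no nontrivial linear combination of the $\delta_{2n}$ is the zero derivation.

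Each step is short because it quotes a lemma proved above, so the real work is conceptual assembly. I expect the only delicate point to be the spanning step: one must justify carefully that Birds-on-a-Wire collapses a general weight-$2$ tree to the single monomial $y^m$ on the wire rather than to a genuine Lie word, and one must pin down the reflection sign $(-1)^m$ that annihilates the odd-degree classes. That sign bookkeeping is the main---if modest---obstacle; everything else follows by invoking the Small Wheels lemma and the isomorphism $\Der_\omega(L) \cong F(L)$.
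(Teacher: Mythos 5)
Your proposal is correct and follows essentially the same route as the paper: Birds-on-a-Wire reduces any weight-$2$ element to $\Theta(x,\ad_{y^m}(x))$, the antisymmetry relation of $F(L)$ kills the odd-$m$ classes via the sign $(-1)^m$, and nonvanishing/independence is read off from the computation $u_{\delta_{2n}}(y) = -2\,\ad_{y^{2n}}(x)$. The only difference is that you also fold in the membership of $\delta_{2n}$ in $\mathfrak{krv}$ via the Small Wheels lemma, which the paper treats as a separate proposition immediately afterward; that is harmless and does not change the argument.
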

%
%

\begin{Prop}
All elements $\delta_{2n}$ are in $\mathfrak{krv}.$
\end{Prop}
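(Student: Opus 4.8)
The plan is to verify the two defining conditions of $\mathfrak{krv}$ for each $\delta_{2n}$: that the corresponding derivation kills $\omega = [x,y]$, and that it is divergence-free. The symplectic condition is automatic by construction, since $\delta_{2n} \in F(L)$ and we have already shown (in the proof that $\Der_\omega(L) \cong F(L)$) that every element of $F(L)$ gives rise to a symplectic derivation. So the entire content of the proposition reduces to checking $\op{div}(u_{\delta_{2n}}) = 0$.

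For the divergence condition I would invoke the Small Wheels lemma directly. The element $\delta_{2n} = \Theta(x, \ad_{y^{2n}}(x))$ has $x$-degree exactly $2$ and $y$-degree $2n$, so its total degree is $2n+2$, which is even, and its $x$-degree $2 \le 3$. These are precisely the hypotheses of the Small Wheels lemma, which asserts $\op{div}(\Gamma) = 0$ under exactly these conditions. Hence $\op{div}(u_{\delta_{2n}}) = 0$ follows immediately, and combined with the symplectic property this places every $\delta_{2n}$ in $\mathfrak{krv}$.

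Since this reduces to citing two results already established in the excerpt, there is essentially no obstacle; the only thing worth making explicit is the bookkeeping of degrees. I would write the proof as follows.

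\begin{proof}
Since $\delta_{2n} \in F(L)$, the associated derivation $u_{\delta_{2n}} = \Phi_1(\delta_{2n})$ lies in $\Der_\omega(L)$, so $u_{\delta_{2n}}([x,y]) = 0$. It remains to check that $\op{div}(u_{\delta_{2n}}) = 0$. The element $\delta_{2n} = \Theta(x, \ad_{y^{2n}}(x))$ is homogeneous of bidegree $(2, 2n)$, hence of $x$-degree $2 \le 3$ and of even total degree $2n+2$. By the Small Wheels lemma we conclude $\op{div}(\delta_{2n}) = 0$, and therefore $\delta_{2n} \in \mathfrak{krv}$.
\end{proof}
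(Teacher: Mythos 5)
Your proof is correct and follows exactly the paper's approach: the paper likewise cites the Small Wheels lemma, noting that trees of even total degree with only two $x$'s have vanishing divergence. Your version is slightly more careful in also making explicit that the symplectic condition $u_{\delta_{2n}}([x,y])=0$ is automatic from $\delta_{2n}\in F(L)$, which the paper leaves implicit.
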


\begin{proof}
By the small wheel's lemma, all trees with an even number of roots and only 2 $x$'s have divergence 0.
\end{proof}

\subsection{Elements of weight 3}

By the small wheels lemma we immediately obtain that any even (total degree) element of $x$-degree 3 is contained in $\mathfrak{krv}$. The main result in this section is that the converse is true.
\begin{Thm}
\begin{equation*}
\mathfrak{krv}^{(3,j)} = 
\begin{cases}
	0, &\text{for } j \text{ even} \\
    F(L)^{(3,j)}, &\text{for } j \text{ odd}.
\end{cases}
\end{equation*}
\end{Thm}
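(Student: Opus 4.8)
The proof splits according to the parity of $j$. For $j$ odd the statement is immediate: any $\Gamma\in F(L)^{(3,j)}$ has even total degree $3+j$ and $x$-degree $3$, so the Small Wheels lemma gives $\dv(u_\Gamma)=0$; since every element of $F(L)$ is automatically symplectic, $F(L)^{(3,j)}\subseteq\mathfrak{krv}$, and the reverse inclusion is trivial, so $\mathfrak{krv}^{(3,j)}=F(L)^{(3,j)}$. All the content is therefore in the even case, where I must show that $\dv$ has no kernel on $F(L)^{(3,j)}$, i.e. that a nonzero symplectic derivation of bidegree $(3,j)$ with $j$ even always has nonzero divergence.

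My plan for $j$ even is to replace $\Gamma$ by the single Lie element $v:=u_\Gamma(y)\in L^{(2,j)}$. First I record that $\Gamma\mapsto v$ is injective: applying the Euler identity of Lemma \ref{lem:euler} inside $F(L)\subset F(A)$ gives $3\Gamma=\tr(\partial_x^{F(L)}(\Gamma)\,x)=-\tr(v\,x)$, so $\Gamma=-\tfrac13\tr(v\,x)$ is recovered from $v$; hence it suffices to prove that $\dv(u_\Gamma)=0$ forces $v=0$. Next I rewrite the divergence. By Lemma \ref{lem:dihed}, $\dv(u_\Gamma)=\tr(g-g^*)$ with $g=\partial_x^L\partial_y^{F(L)}(\Gamma)=\partial_x^L(u_\Gamma(x))\in A$, and the proof of that lemma identifies the two summands of the divergence as $\tr(\partial_x^L u_\Gamma(x))=\tr(g)$ and $\tr(\partial_y^L u_\Gamma(y))=-\tr(g^*)$. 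For $j$ even the word $g$ has odd total degree $j+1$, so $g^*=-g^{\mathrm{rev}}$; and since $g$ carries exactly two $x$'s, its cyclic word is reversal invariant, so $\tr(g^{\mathrm{rev}})=\tr(g)$. Therefore $\tr(g^*)=-\tr(g)$, the two summands agree, and
\[
\dv(u_\Gamma)=2\,\tr(\partial_y^L v).
\]
In particular $\dv(u_\Gamma)=0$ if and only if $\tr(\partial_y^L v)=0$.

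It then remains to show that the linear map $D\colon L^{(2,j)}\to\tr^{(2,j-1)}$, $v\mapsto\tr(\partial_y^L v)$, is injective for $j$ even. Here the two spaces have the same dimension: a bigraded Witt count gives $\dim L^{(2,j)}=j/2$, and counting two-coloured necklaces with exactly two $x$'s gives $\dim\tr^{(2,j-1)}=j/2$, so injectivity is equivalent to nonsingularity of $D$ and automatically yields $v=0$, hence $\Gamma=0$ and $\mathfrak{krv}^{(3,j)}=0$. This is in fact stronger than strictly needed, since only the subspace $\{u_\Gamma(y):\Gamma\in F(L)^{(3,j)}\}$ must be controlled; but proving injectivity on all of $L^{(2,j)}$ avoids having to describe that subspace. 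Concretely I would take the basis $e_a=[\ad_y^a(x),\ad_y^{j-a}(x)]$, $0\le a<j/2$ (a Lyndon/Lazard basis of $L^{(2,j)}$, using $e_{j-a}=-e_a$ and $e_{j/2}=0$) together with the basis $T_c=\tr(x\,y^c\,x\,y^{j-1-c})$, $0\le c<j/2$, of $\tr^{(2,j-1)}$; using the Leibniz rule $\partial_y^L([l,m])=l\,\partial_y^L(m)-m\,\partial_y^L(l)$ and the closed form $\partial_y^L(\ad_y^a(x))=-\sum_{k=0}^{a-1}y^k\,\ad_y^{a-1-k}(x)$, each $D(e_a)$ expands into a fixed combination of the $T_c$.

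The main obstacle is precisely the nonsingularity of the resulting $(j/2)\times(j/2)$ matrix $[D(e_a)]$ for all even $j$. Each entry is a sum of cyclic words $\tr(\ad_y^s(x)\,y^k\,\ad_y^t(x))$, which rewrite in the $T_c$ basis via the binomial expansion of $\ad_y^{\bullet}(x)$, so the difficulty is the global linear-algebra statement rather than any single evaluation. Low weights are reassuring: for $j=2$ the matrix is $(-1)$, and for $j=4$ it is $\left(\begin{smallmatrix}-3&2\\2&-2\end{smallmatrix}\right)$ with determinant $2$, both nonsingular. I expect the cleanest route to the general case is either to exhibit a change of basis making $[D(e_a)]$ triangular with nonzero diagonal, or to identify $D$ (up to a nonzero scalar) with the transpose of a manifestly nondegenerate pairing between $L^{(2,j)}$ and $\tr^{(2,j-1)}$, for instance by packaging the entries into a single two-variable generating function and evaluating its determinant in closed form.
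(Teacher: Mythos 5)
Your reduction in the even case is correct and takes a genuinely different route from the paper, but it stops at precisely the point where all the content of the theorem lies. The steps you do carry out are sound: injectivity of $\Gamma\mapsto v=u_\Gamma(y)$ via Lemma \ref{lem:euler}, the identity $\dv(u_\Gamma)=2\tr(\partial_y^L v)$ for $j$ even (using Lemma \ref{lem:dihed}, the sign $g^*=-g^{\mathrm{rev}}$ in odd total degree $j+1$, and reversal-invariance of cyclic words containing exactly two $x$'s), and the count $\dim L^{(2,j)}=\dim\tr^{(2,j-1)}=j/2$. After this reduction, however, the theorem is \emph{equivalent} to the nonsingularity of the $(j/2)\times(j/2)$ matrix of $D\colon v\mapsto\tr(\partial_y^L v)$ for every even $j$, and you only verify $j=2$ and $j=4$ before listing two strategies you ``expect'' to work. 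That is a genuine gap, not a routine verification: it is the entire hard direction of the statement.

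The paper never needs injectivity of $D$ on all of $L^{(2,j)}$: it encodes $v=\sum c_{a,b}[\ad_y^a(x),\ad_y^b(x)]$ as an antisymmetric polynomial $P(X,Y)$ of degree $j$ and imposes \emph{both} the divergence equation \eqref{eq:cond2} and the constraint \eqref{eq:cond1} that $v$ lies in the image of $F(L)^{(3,j)}$, then kills $P$ by an infinite descent on divisibility. Your route discards the tree constraint and must therefore prove the stronger statement that $D$ is injective on all of $L^{(2,j)}$. That stronger statement is in fact true, so your approach is not doomed, only incomplete. One way to close it: in the polynomial language, $\tr(\partial_y^L v)=0$ says that $P(A-B,B)/A$ is antisymmetric under $A\leftrightarrow B$, which after the substitution $X=A-B$, $Y=B$ and one use of the antisymmetry of $P$ becomes the identity $Y\,P(X,Y)=(X+Y)\,P(X+Y,-X)$. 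Iterating the substitution $(X,Y)\mapsto(X+Y,-X)$ (a matrix whose cube is minus the identity, under which $X\mapsto X+Y\mapsto Y\mapsto -X$) three times turns this into $P(-X,-Y)=-P(X,Y)$, contradicting the evenness of $P$ (i.e.\ $j$ even) unless $P=0$. Some argument of this kind, or a proof of your determinant claim, must be supplied; as written, the even case rests on two numerical checks and a stated hope.
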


We will study the image of $\mathfrak{krv}$ under the map $F(L) \to L$
\begin{align*}
F(L) &\longrightarrow L \\
\Gamma &\longmapsto -u_\Gamma(y) = \partial_x(\Gamma).
\end{align*}
By Lemma \ref{lem:euler} for any element $\Gamma \in F(L)^{(k,j)}$ we have
$$
k \Gamma = \Theta( x, \partial_x( \Gamma)),
$$
and hence the above assigment is injective and (after applying $\partial_x$ to this equation) we see that the image of $F(L)^{(k,\bullet)}$ can identified with the eigenspace to the eigenvalue $k$ of the operator
    \begin{align*}
        \kappa: L &\longrightarrow L \\
        \Gamma &\mapsto \partial_x (\Theta(x \Gamma)).
    \end{align*}

Let us now write an arbitrary element in $L$ of $x$-degree 2 in the form
$$
\sum_{i,j} c_{i,j} [\ad_y^i(x), \ad_y^j(x)],
$$
for some polynomial $P = \sum_{i,j} c_{i,j} X^i Y^j \in \mathbb{R}[X,Y]$ satisfying the antisymmetry condition
\begin{equation}
P(X,Y) = -P(X,Y).
\end{equation}
This sets up a linear bijection between the $x$-degree 2 elements of $L$ and antisymmetric polynomials. We want to study the conditions: (i) $P$ corresponds to an element coming from a tree $\Gamma \in F(L)$ and (ii) the corresponding $\Gamma$ has divergence 0.

\subsection{Condition (i)}
\begin{Lem}
Under the above identification, the operator $\kappa$ corresponds to
$$
P(X,Y) \mapsto P(X,Y) - P(-X-Y,Y)+ P(-X-Y,X)
$$
\end{Lem}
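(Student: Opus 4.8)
The plan is to turn the definition of $\kappa$ into an explicit ``reinsertion'' formula and then evaluate it on the basis brackets $[\ad_{y^i}(x),\ad_{y^j}(x)]$, reading off the resulting polynomial.

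First I would rewrite $\kappa(\Gamma)=\partial_x^{F(L)}(\Theta(x,\Gamma))$ by regarding $\Theta(x,\Gamma)$ as the element $\tr(x\Gamma)\in F(A)$. Applying Lemma \ref{lem:somethingmadeoutofpartial} gives $\kappa(\Gamma)=\partial_x^2(x\Gamma)\,\partial_x^1(x\Gamma)$, and the Leibniz rule yields $\partial_x^A(x\Gamma)=1\otimes\Gamma + x\,\partial_x^1(\Gamma)\otimes\partial_x^2(\Gamma)$. Applying the multiplication $p\otimes q\mapsto qp$ then produces the clean formula
$$
\kappa(\Gamma)=\Gamma+\partial_x^2(\Gamma)\,x\,\partial_x^1(\Gamma).
$$
The first summand $\Gamma$ already accounts for the term $P(X,Y)$, so all the content lies in understanding the correction $\mu_x(\partial_x^A\Gamma)$, where I write $\mu_x(p\otimes q)=qxp$. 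Graphically this is exactly the statement that the three $x$'s of the cyclic word $\tr(x\Gamma)$ contribute three terms: the explicit $x$ returns $\Gamma$, while the two internal $x$'s give the correction.

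Next I would evaluate the correction on $\Gamma=[a,b]$ with $a=\ad_{y^i}(x)$, $b=\ad_{y^j}(x)$. Using $\partial_x^A[a,b]=[a,\partial_x^A b]+[\partial_x^A a,b]$ and the fact that $\partial_x^A$ kills $y$ and passes through the brackets, so $\partial_x^A(\ad_{y^k}(x))=\ad_{y^k}(1\otimes 1)$, the correction splits into one piece per internal $x$. Expanding $\ad_{y^k}(1\otimes1)=\sum_m(-1)^m\binom{k}{m}y^{k-m}\otimes y^m$ and applying $\mu_x$, each piece should collapse after recognizing a commutator. I expect to find
$$
\mu_x([\partial_x^A a,b])=-(-1)^i\,\ad_{y^i}([x,b]),\qquad \mu_x([a,\partial_x^A b])=(-1)^j\,\ad_{y^j}([x,a]).
$$
To pass to polynomials I record two dictionary entries: since $\ad_y$ is a derivation of the bracket it acts in the model as multiplication by $(X+Y)$, hence $\ad_{y^k}\leftrightarrow (X+Y)^k$; and $[x,\ad_{y^k}(x)]=[\ad_{y^0}(x),\ad_{y^k}(x)]$ corresponds to $Y^k$. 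Combining these with $(-1)^k(X+Y)^k=(-X-Y)^k$, the two correction pieces become $-(-X-Y)^iY^j$ and $(-X-Y)^jY^i$. Summing the three contributions and extending linearly gives, for $P=\sum c_{ij}X^iY^j$, the operator $P(X,Y)-P(-X-Y,Y)+P(Y,-X-Y)$.

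Finally I would reconcile this with the stated form. The identification sends symmetric polynomials to $0$, so $\kappa$ is only pinned down modulo the symmetric part, and it must preserve antisymmetric polynomials (the stated operator does, since swapping $X\leftrightarrow Y$ negates it). For antisymmetric $P$ one has $P(Y,-X-Y)=-P(-X-Y,Y)$, while $P(-X-Y,X)+P(-X-Y,Y)$ is symmetric under $X\leftrightarrow Y$ and hence zero in the model; these two identities rewrite $-2P(-X-Y,Y)$ as $-P(-X-Y,Y)+P(-X-Y,X)$, yielding exactly $P(X,Y)-P(-X-Y,Y)+P(-X-Y,X)$. I expect the main obstacle to be precisely this last step: the sign bookkeeping in the two collapsing computations, together with the fact that the three cuts do \emph{not} match the three displayed terms one-to-one but only after antisymmetrization, so care is needed to keep the symmetric-polynomial ambiguity under control throughout.
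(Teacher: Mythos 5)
Your proof is correct and follows essentially the same route as the paper: expand $\partial_x(\Theta(x,\Gamma))$ by the Leibniz rule into one term per occurrence of $x$, identify the two correction terms as $-(-1)^i\ad_{y}^i([x,b])$ and $(-1)^j\ad_{y}^j([x,a])$, and translate via $\ad_y\leftrightarrow$ multiplication by $(X+Y)$. The only difference is bookkeeping: the paper works with the antisymmetrized representatives $\tfrac12(X^iY^j-X^jY^i)$ throughout and so lands on the stated formula directly, whereas you use monomial representatives and must reconcile modulo symmetric polynomials at the end --- your reconciliation is carried out correctly.
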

\begin{proof}
It is enough to compute $\kappa([\ad_y^i(x), \ad_y^j(x)]) = \partial_x( \Theta(x, [\ad_y^i(x), \ad_y^j(x)]))$ which corresponds to $P(X,Y) = \tfrac{1}{2}(X^iY^j - X^jY^i)$. Using the marking procedure this gives
\begin{align*}
 \Theta(\hat{x}, [\ad_y^i(x), \ad_y^j(x)]) +  \Theta(x [\ad_y^i(\hat{x}), \ad_y^j(x)]) +  \Theta(x [\ad_y^i(x), \ad_y^j(\hat{x})]) \\
 =\Theta(\hat{x}, [\ad_y^i(x), \ad_y^j(x)]) -  \Theta(\ad_y^i(\hat{x}), [x, \ad_y^j(x)]) +  \Theta(\ad_y^j(\hat{x}), [x, \ad_y^i(x)) \\
 =\Theta(\hat{x}, [\ad_y^i(x), \ad_y^j(x)]) -  \Theta(\hat{x}, \ad_{-y}^i ([x, \ad_y^j(x)])) +  \Theta(\hat{x}), (\ad_{-y}^j([x, \ad_y^i(x)]))
\end{align*}
and thus
$$
\kappa([\ad_y^i(x), \ad_y^j(x)])  = [\ad_y^i(x), \ad_y^j(x)] - \ad_{-y}^i ([x, \ad_y^j(x)]) + \ad_{-y}^j([x, \ad_y^i(x)]),
$$
which corresponds to the polynomial
\begin{eqnarray*}
\tfrac{1}{2}( X^i Y^j - X^j Y^i  - (-X-Y)^iY^j + (-X-Y)^iX^j + (-X-Y)^jY^i - (-X-Y)^jX^i) \\
= P(X,Y) - P(-X-Y,Y)+ P(-X-Y,X),
\end{eqnarray*}
which proves the claim.
\end{proof}

Thus we have shown the following
\begin{Prop}
There is a bijection between elements of $F(L)$ of $x$-degree 3 and the space
\begin{equation}
\label{eq:cond1}
\mathcal{P} = \{ P \in \mathbb{R}[X,Y] \ | \ P(X,Y) = -P(Y,X), \ 2 P(X,Y) = P(-X-Y,X) - P(-X-Y,Y) \}.  
\end{equation}
\end{Prop}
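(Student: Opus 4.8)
The plan is to assemble the claimed bijection as a composite of three maps that are already in place: the injection $F(L)^{(3,\bullet)} \hookrightarrow L$ given by $\Gamma \mapsto \partial_x(\Gamma)$, the linear isomorphism between the $x$-degree $2$ part of $L$ and antisymmetric polynomials, and the translation of the operator $\kappa$ into a polynomial operation supplied by the preceding Lemma. First I would recall, from the identity $k\,\Gamma = \Theta(x,\partial_x(\Gamma))$ for $\Gamma \in F(L)^{(k,\bullet)}$ (Lemma \ref{lem:euler}), that $\Gamma \mapsto \partial_x(\Gamma)$ is injective and that its image is exactly the eigenspace $\{\gamma \in L : \kappa(\gamma) = k\gamma\}$. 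Specializing to $k=3$, this identifies the $x$-degree $3$ elements of $F(L)$ with the $3$-eigenspace of $\kappa$ acting on the $x$-degree $2$ part of $L$.

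Next I would invoke the linear bijection $P \mapsto \sum_{i,j} c_{i,j}[\ad_y^i(x),\ad_y^j(x)]$ between antisymmetric polynomials $P = \sum c_{i,j}X^iY^j$ satisfying $P(X,Y) = -P(Y,X)$ and $x$-degree $2$ elements of $L$. Under this bijection the preceding Lemma rewrites $\kappa$ as $P \mapsto P(X,Y) - P(-X-Y,Y) + P(-X-Y,X)$, so the eigenvalue condition $\kappa(\gamma) = 3\gamma$ becomes
$$P(X,Y) - P(-X-Y,Y) + P(-X-Y,X) = 3\,P(X,Y),$$
which rearranges directly to $2\,P(X,Y) = P(-X-Y,X) - P(-X-Y,Y)$, the second defining relation of $\mathcal{P}$. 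Combined with the antisymmetry $P(X,Y) = -P(Y,X)$ built into the parametrization, this shows that $\Gamma \in F(L)^{(3,\bullet)}$ corresponds precisely to $P \in \mathcal{P}$, yielding the bijection.

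The step that needs genuine care is the linear bijection between the $x$-degree $2$ elements of $L$ and antisymmetric polynomials, since everything else is formal. Surjectivity follows from the birds-on-a-wire lemma applied to the two $x$'s: any such Lie element shuffles into a combination of the $[\ad_y^i(x),\ad_y^j(x)]$. For injectivity I would verify that the brackets $\{[\ad_y^i(x),\ad_y^j(x)] : i < j\}$ are linearly independent in each bidegree, i.e.\ that no Jacobi relation collapses them beyond the antisymmetry $[\ad_y^i(x),\ad_y^j(x)] = -[\ad_y^j(x),\ad_y^i(x)]$, which matches the dimension count of homogeneous antisymmetric polynomials. A secondary bookkeeping point is keeping the scalar $k=3$ consistent throughout, so that the eigenvalue equation produces exactly the factor $2$ on the left-hand side of the second relation defining $\mathcal{P}$.
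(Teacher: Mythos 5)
Your proposal is correct and follows essentially the same route as the paper, which states this Proposition as an immediate consequence of the preceding discussion (the injectivity of $\Gamma\mapsto\partial_x(\Gamma)$ via $k\Gamma=\Theta(x,\partial_x(\Gamma))$, the identification of the image with the $k=3$ eigenspace of $\kappa$, and the Lemma translating $\kappa$ into the substitution operation on polynomials). Your added attention to the injectivity of the parametrization by antisymmetric polynomials, i.e.\ the linear independence of the $[\ad_y^i(x),\ad_y^j(x)]$ for $i<j$, addresses a point the paper asserts without proof and is a welcome, correct refinement rather than a deviation.
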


\subsection{Condition (ii): $\dv(u) = 0$}
Next we wish to express the condition $\dv(u) = 0$ under the above identification in terms of the polynomial $P$. To this purpose let us identify the relevant trace space $\tr^{\op{deg}_x = 2}$ with symmetric polynomials under the map
\begin{align*}
\mathbb{R}[X,Y] &\longrightarrow \tr^{\op{deg}_x = 2} \\
X^i Y^j &\longmapsto \tr(xy^ixy^j).
\end{align*}

Let us now compute $\dv(u)$ for $u$ corresponding to $P = -\tfrac{1}{2}X^iYj - X^jY^i$. By Lemma \ref{lem:dihed} we have
$$
\dv(u) = \tr( \partial_y u(y) - (\partial_y u(y))^* ).
$$
By assumption $u(y) = [\ad_y^i(x), \ad_y^j(x)]$. We compute
\begin{align*}
    \partial_y([\ad_y^i(x), \ad_y^j(x)]) = (\ad^i_y x \ad^j_y x - \ad^j_y x \ad^i_y x - \ad^i_y x y^j x + \ad^j_y x y^i x)y^{-1}.
\end{align*}
To take the trace of this expression, we only need to keep track of the number of $y$'s between the $x$'s. Since in the expansion of each term in the expression, the relative positions of the two $x$'s are fixed, we count the number of $y$'s outside of the two $x$'s and between the two $x$'s. We temporarily denote the $y$'s that expand to the outside as $a$, and those on the inside as $b$. This way we obtain the expression
$$
((a-b)^i (b-a)^j - (b-a)^i (a-b)^j - (a-b)^i b^j + b^i (a-b)^j)  a^{-1}.
$$
In terms of $P$ this can be written as
$$
(P(a-b,b) - P(a-b,b-a))/a.
$$
If we further assume that $P$ is even we have $P(x, -x) = P(-x, x)$ while $P(x, -x) = -P(-x, x)$ by antisymmetry. Then, $P(a-b, b-a) = 0,$ so we are left with the term $P(a-b, b)/a$. Under this assumption we that $\dv(u) = 0$ is equivalent to
\begin{equation}
\label{eq:cond2}
\frac{1}{X}P(Y,X-Y) + \frac{1}{Y}P(X,Y-X) = 0,
\end{equation}
where the second term corresponds to $(\partial_y u(y) )^*$.

\subsubsection{Solving the functional equations: $P$ is even}
From equations \eqref{eq:cond1} and \eqref{eq:cond2}, we conclude that even elements in $\mathfrak{krv}^{(3,j)}$ correspond to polynomials $P \in \mathbb{R}[X,Y]$ that satisfy the following conditions:

\label{polyeqns}
\ \ \ -1): $P$ is even

\ \ \ 0): \ $P(X,Y) = - P(Y,X)$

\ \ \ 1): \ $2 P(X,Y) = P(Y, -X-Y) - P(X, -X-Y)$

\ \ \ 2): \ $\frac{1}{X}P(Y,X-Y) + \frac{1}{Y}P(X,Y-X) = 0$

\begin{Prop}
The only polynomial $P$ satisfying the above conditions is $P = 0$.
\end{Prop}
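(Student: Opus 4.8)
The plan is to recast the four conditions as statements about the natural $S_3$-action on $\mathbb{R}[X,Y]$, use conditions $0)$ and $1)$ to pin down the shape of $P$ completely, and then let condition $2)$ annihilate it. Since every condition is degree-preserving, I may assume $P$ is homogeneous; by $-1)$ its degree $d$ is even.

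Set $Z = -X-Y$, so that the substitutions $\sigma\colon (X,Y)\mapsto(Y,X)$ and $s\colon(X,Y)\mapsto(Y,-X-Y)$ realize the permutation action of $S_3 = \langle \sigma, s\rangle$ on the coordinates $(X,Y,Z)$ of the plane $X+Y+Z=0$, with $\sigma$ a transposition and $s$ a $3$-cycle. Condition $0)$ reads $P\circ\sigma = -P$, and since the substitution $(X,Y)\mapsto(X,-X-Y)$ equals $\sigma\circ s^2$, condition $1)$ becomes $2P = P\circ s + P\circ s^2$. Composing this with $s$ on the right and subtracting gives $3(P\circ s) = 3P$, hence $P\circ s = P$: thus $P$ is invariant under $A_3$ and, the three transpositions being conjugate, changes sign under each of them. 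In other words $P$ lies in the sign isotype of $S_3$.

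Consequently $P$ vanishes on the three hyperplanes $X=Y$, $Y=Z$, $Z=X$, so the pairwise-coprime product $\Delta := (X-Y)(Y-Z)(Z-X)$ divides $P$; writing $P = \Delta\cdot S$ and noting $\Delta$ is itself alternating, $S$ is $S_3$-invariant. By the fundamental theorem of symmetric polynomials (applied on the plane $X+Y+Z=0$), $S$ is a polynomial in $q_2 = X^2+XY+Y^2$ and $q_3 = XY(X+Y)$. Evenness of $P$ together with $\deg\Delta = 3$ forces $\deg S$ odd; since $-I$ fixes $q_2$ and negates $q_3$, this says $S$ is odd in $q_3$, so $S = q_3\, T(q_2, q_3^2)$ for some polynomial $T$.

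Finally I would impose condition $2)$. Its substitutions $(X,Y)\mapsto(Y,X-Y)$ and $(X,Y)\mapsto(X,Y-X)$ lie outside $S_3$, so here one computes directly, finding the coincidences $\Delta(Y,X-Y) = \Delta(X,Y-X)$, $q_2(Y,X-Y) = q_2(X,Y-X) =: \tilde q_2$, and $q_3(Y,X-Y) = \tilde q_3 = -q_3(X,Y-X)$ with $\tilde q_3 = XY(X-Y)$. Substituting the normal form $P=\Delta\,q_3\,T(q_2,q_3^2)$ and clearing denominators, condition $2)$ collapses to $\Delta(Y,X-Y)\,\tilde q_3\,(Y-X)\,T(\tilde q_2,\tilde q_3^{\,2}) = 0$, forcing $T(\tilde q_2, \tilde q_3^{\,2}) = 0$ identically. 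As $\tilde q_2$ and $\tilde q_3^{\,2}$ are algebraically independent (restrict to $Y=0$ and induct on the degree in the second slot), $T = 0$, whence $S=0$ and $P=0$. I expect the genuine obstacle to be exactly this last step: conditions $0)$ and $1)$ are internal to the symmetry group and dissolve formally, whereas condition $2)$ uses substitutions not in $S_3$, so there is no structural reason its pullback should be governed by $\Delta, q_2, q_3$; the crux is the explicit verification of the three (anti)coincidences above, after which the cancellation to a single factor $(Y-X)$ and the algebraic-independence conclusion are routine.
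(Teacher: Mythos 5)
Your proof is correct, and it takes a genuinely different route from the paper's. The paper proceeds by infinite descent: it extracts the linear divisors $X$, $Y$, $X+Y$, $X-Y$, $2X+Y$, $X+2Y$ from the four conditions, divides them out, rewrites the conditions for the quotient, and observes that after two further rounds of factoring out $XY(X+Y)$ the system of conditions becomes periodic, so a nonzero $P$ would need infinitely many factors. You instead exploit the $S_3$-structure directly: your derivation $2P = P\circ s + P\circ s^2 \Rightarrow 3(P\circ s)=3P$ is valid (using $s^3=\mathrm{id}$ and condition $0)$ to rewrite $-P(X,-X-Y)$ as $P(-X-Y,X)$), so conditions $0)$ and $1)$ alone place $P$ in the sign isotype, and with evenness you reach the normal form $P=\Delta\,q_3\,T(q_2,q_3^2)$. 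Your three (anti)coincidences all check out: the substitution $(X,Y)\mapsto(Y,X-Y)$ produces the triple $(Y,X-Y,-X)$ and $(X,Y)\mapsto(X,Y-X)$ produces $(X,Y-X,-Y)$, which differ by a transposition of the first two slots, whence $\Delta(Y,X-Y)=\Delta(X,Y-X)=-(2Y-X)(2X-Y)(X+Y)$, $\tilde q_2=X^2-XY+Y^2$ in both cases, and $q_3(Y,X-Y)=XY(X-Y)=-q_3(X,Y-X)$. The left side of condition $2)$ then equals $-\Delta(Y,X-Y)\,(X-Y)^2\,T(\tilde q_2,\tilde q_3^{\,2})$, a nonzero polynomial multiple of $T(\tilde q_2,\tilde q_3^{\,2})$, and algebraic independence of $\tilde q_2,\tilde q_3^{\,2}$ is immediate since $\tilde q_2,\tilde q_3$ arise from $q_2,q_3$ by an invertible linear change of variables; hence $T=0$ and $P=0$. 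What your approach buys is a clean separation of roles: the $S_3$-conditions produce a closed normal form (which persists in the odd case and explains the dimension count there), and only condition $2)$ is needed to kill the even case; it also avoids the paper's repeated ``update the conditions'' steps, which are asserted but not verified there. The paper's descent is more elementary in its ingredients, but its termination argument is less transparent than your single explicit computation.
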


\begin{proof}
We show this by assuming that a $P$ satisfies the equations, and repeatedly factoring $P,$ updating the conditions, until the conditions repeat. We conclude that $P$ must have infinitely many factors so it must be 0.

First, we notice that $P(x,y)$ is divisible by $x,y,x+y$ and $x-y$. Then, through equation 2) we also have that $P(x,y)$ is divisible by $2x+y$ and $x+2y.$ Then, let $P(x,y) = (x)(y)(x+y)(x-y)(2x+y)(x+2y)P_2(x,y).$

We apply this substitution, and update the four conditions to:

\ \ \ -1): $P_2$ is even

\ \ \ 0): \ $P_2(X,Y) = P_2(Y,X)$

\ \ \ 1): \ $2 P_2(X,Y) = P_2(Y, -X-Y) + P_2(X, -X-Y)$

\ \ \ 2): \ $\frac{1}{X}P_2(Y,X-Y) - \frac{1}{Y}P_2(X,Y-X) = 0$

Then, we notice that $P_2(x,y)$ is divisible by $x, y, x+y.$ Let $P_2(x,y) = xy(x+y)P_3(x,y).$

We apply the new substitution and update the conditions again to:

\ \ \ -1): $P_3$ is odd

\ \ \ 0): \ $P_3(X,Y) = P_3(Y,X)$

\ \ \ 1): \ $2 P_3(X,Y) = P_3(Y, -X-Y) + P_3(X, -X-Y)$

\ \ \ 2): \ $\frac{1}{X}P_3(Y,X-Y) + \frac{1}{Y}P_3(X,Y-X) = 0$

We again get $x, y, x+y$ divide $P_3(x,y).$ Let $P_3(x,y) = xy(x+y)P_4(x,y).$

We apply the new substitution and update the conditions again to
\ \ \ -1): $P_4$ is even

\ \ \ 0): \ $P_4(X,Y) = P_4(Y,X)$

\ \ \ 1): \ $2 P_4(X,Y) = P_4(Y, -X-Y) + P_4(X, -X-Y)$

\ \ \ 2): \ $\frac{1}{X}P_4(Y,X-Y) - \frac{1}{Y}P_4(X,Y-X) = 0$

We notice that the conditions are the same as those for $P_2.$ Then, the factoring process repeats, and the original $P$ must have infinitely many factors, so it must be 0.
\end{proof}

\subsubsection{P is odd}

When $P$ is odd, we do not have condition (ii). Then, the dimension of $\mathfrak{krv}^{3,j}$ for $j$ odd is the dimension of $F(L)^{(3,j)}$, that is the number of Jacobi trees with 3 $x$'s and $j$ $y$'s.

\begin{Prop}
The dimension of Jacobi trees with 3 $x$'s and $m$ $y$'s is given by $\lfloor\frac{m-1}{2}\rfloor - \lfloor\frac{m-1}{3}\rfloor.$
\end{Prop}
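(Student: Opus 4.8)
The plan is to reduce everything to the polynomial side. By the preceding proposition, the map $\Gamma \mapsto -u_\Gamma(y) = \partial_x(\Gamma)$ identifies $F(L)^{(3,m)}$ with the degree-$m$ part $\mathcal{P}^{(m)}$ of the space $\mathcal{P}$ from \eqref{eq:cond1}, namely the homogeneous polynomials $P \in \mathbb{R}[X,Y]$ of degree $m$ satisfying the antisymmetry $P(X,Y) = -P(Y,X)$ and the functional equation $2P(X,Y) = P(-X-Y,X) - P(-X-Y,Y)$. So counting Jacobi trees amounts to computing $\dim \mathcal{P}^{(m)}$, and the whole argument proceeds on polynomials.

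First I would reinterpret the two conditions representation-theoretically. Setting $Z = -X-Y$, the group $S_3$ acts on $\mathbb{R}[X,Y]$ by permuting $X,Y,Z$; concretely the transposition $\alpha\colon P(X,Y)\mapsto P(Y,X)$ swaps $X,Y$, while the $3$-cycle $\beta\colon P(X,Y)\mapsto P(-X-Y,X)$ cycles $X\to Z\to Y\to X$, and one checks $\beta^3=\mathrm{id}$ with $\beta^2 P = P(Y,-X-Y)$. Condition (0) says $\alpha P = -P$. For condition (1), the antisymmetry gives $P(-X-Y,Y) = -P(Y,-X-Y) = -\beta^2 P$, so the equation becomes $2P = \beta P + \beta^2 P$, i.e. $P + \beta P + \beta^2 P = 3P$; as the left-hand side is $\beta$-invariant, this is equivalent to $\beta P = P$. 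Thus (0)–(1) hold exactly when $\alpha$ acts by $-1$ and $\beta$ acts by $+1$, which is precisely the condition that $P$ lies in the sign-isotypic component of the degree-$m$ part of $\mathbb{R}[X,Y]$ for this $S_3$-action.

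Next I would compute the Hilbert series of that component. The $S_3$-action is the standard action of the rank-$2$ reflection group of type $A_2$; its invariant ring is a free polynomial ring on generators of degrees $2$ and $3$, and the anti-invariants form a free rank-one module over the invariants generated by the discriminant $(X-Y)(Y-Z)(Z-X)$ of degree $3$. Hence
$$\sum_{m\ge 0}\dim\mathcal{P}^{(m)}\,t^m = \frac{t^3}{(1-t^2)(1-t^3)}.$$
If one prefers, the same series follows from Molien's formula $\tfrac16\bigl(\tfrac{1}{(1-t)^2}-\tfrac{3}{1-t^2}+\tfrac{2}{1+t+t^2}\bigr)$, using the eigenvalues $(1,1)$, $(1,-1)$, $(\omega,\omega^2)$ of the identity, a transposition, and a $3$-cycle on the $2$-dimensional representation.

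Finally I would match this to the claimed closed form via the standard expansions $\sum_m\lfloor\tfrac{m-1}{2}\rfloor t^m = \tfrac{t^3}{(1-t)(1-t^2)}$ and $\sum_m\lfloor\tfrac{m-1}{3}\rfloor t^m = \tfrac{t^4}{(1-t)(1-t^3)}$: over the common denominator $(1-t)(1-t^2)(1-t^3)$ their difference collapses to $\tfrac{t^3-t^4}{(1-t)(1-t^2)(1-t^3)} = \tfrac{t^3}{(1-t^2)(1-t^3)}$, which is exactly the Hilbert series above, so comparing coefficients yields $\dim F(L)^{(3,m)} = \lfloor\tfrac{m-1}{2}\rfloor - \lfloor\tfrac{m-1}{3}\rfloor$. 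I expect the main obstacle to be the second step: correctly pinning down the $S_3$-action and verifying, with antisymmetry inserted at just the right place, that condition (1) reduces to plain $\beta$-invariance, so that the two functional equations together single out precisely the sign representation. Once that identification is secured, the remaining steps are standard invariant theory and a routine generating-function manipulation.
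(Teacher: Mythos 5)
Your proof is correct, but it takes a genuinely different route from the paper's. The paper does not pass through the space $\mathcal{P}$ at all: it encodes a tree $\Theta(\ad_{y^i}x,[\ad_{y^j}x,\ad_{y^k}x])$ directly as the monomial $x^iy^jz^k$ in three variables, observes that antisymmetry of the tree forces the polynomial to be totally antisymmetric while IHX kills the ideal generated by $x+y+z$, factors out the Vandermonde $(x-y)(y-z)(z-x)$, and is left with counting monomials $(xy+yz+zx)^a(xyz)^b$, i.e.\ solutions of $2a+3b=m-3$. You instead start from the two-variable description of condition (i), recognize the two functional equations as cutting out the sign-isotypic component of the $S_3$-action on $\mathbb{R}[X,Y]$ with $Z=-X-Y$ (your reduction of condition (1) to $\beta$-invariance, using antisymmetry to rewrite $P(-X-Y,Y)=-\beta^2P$ so that the equation becomes $(1+\beta+\beta^2)P=3P$, is the key step and is correct), and then invoke the invariant theory of the $A_2$ reflection group, or Molien's formula, to get the Hilbert series $t^3/((1-t^2)(1-t^3))$; the closing generating-function identities check out. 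The two arguments are secretly the same count, since totally antisymmetric polynomials in $x,y,z$ modulo $(x+y+z)$ are exactly the anti-invariants of the standard $S_3$-representation, but they enter from opposite ends. The paper's version is self-contained and does not depend on the Proposition identifying $F(L)$ of $x$-degree $3$ with $\mathcal{P}$, whose surjectivity onto the $\kappa$-eigenspace is only sketched there and which your argument takes as input; in exchange, your version avoids having to justify that the kernel of the three-variable encoding is exactly antisymmetrization together with the ideal $(x+y+z)$, a point the paper also treats rather briskly, and it ties the dimension count directly to the functional-equation picture used in the divergence analysis.
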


\begin{proof}
Any trees with 3 $x$’s is a sum of elements of the form $\Theta(ad_{y^i}(x), [ad_{y^j}(x), ad_{y^k}(x)])$.
We can encode these elements as a polynomial in $P = \mathbb{R}[x,y,z].$ We have a map $\alpha: P \to F(L)$, given by the formula
$$
x^i y^j z^k \mapsto \Theta(\ad_y^i x [\ad_y^j x, \ad_y^k x]).
$$
We observe the antisymmetry in trees translate to the polynomial being totally anti-symmetric. We then factor our polynomial into $(x-y)(y-z)(z-x)p(x,y,z)$, where $p$ is a symmetric polynomial.
The IHX relations in trees translate to that $x^{i+1} y^j z^k + x^i y^{j+1} z^k + x^i y^j z^{k+1} = x^i y^j z^k (x+y+z)$ gets sent to zero.
Therefore, anything of the form $(x+y+z)f(x,y,x)$ gets sent to zero. A symmetric polynomial $p$ in three variables can be expressed in terms of the elementary symmetric polynomials $x+y+z, \ xz+xy+yz, \ xyz$. Since all multiples of $(x+y+z)$ are sent so $0$, we can assume that $p$ is a polynomial in $xz+xy+yz$ and $xyz$. The degree of the polynomial $(x-y)(y-z)(z-x)p(x,y,z)$ gives the  $y$-degree of the tree. Then, the dimension of the set of polynomials corresponding to trees of $m$ $y$’s is given by the number of solutions to $\deg((xz+ xy+ yz)^a (xyz)^b) + 3 = m,$ or $2a + 3b = m-3$ for integers $a, b \geq 0$. This equation has $\lfloor\frac{m-1}{2}\rfloor - \lfloor\frac{m-1}{3}\rfloor$ solutions.
\end{proof}



\begin{thebibliography}{00}
\bibitem{AKKN} A. Alekseev, N. Kawazumi, Y. Kuno and F. Naef, {\it Higher genus Kashiwara-Vergne problems and the Goldman-Turaev Lie bialgebra}, C. R. Acad. Sci. Paris, Ser. I {\bf 355}, 123--127 (2017)

\bibitem{AKKN2} A. Alekseev, N. Kawazumi, Y. Kuno and F. Naef, {\it The Goldman-Turaev Lie bialgebra in genus zero and the Kashiwara-Vergne problem}, Adv. Math. \textbf{326}, 1--53
(2018)

\bibitem{AL} A. Alekseev, A. Lachowska, E.Raphael, {\it Divergence and q-divergence in depth 2} , J. Lie Theory, \textbf{25} (2015), no. 4, 1167--1190.

\bibitem{Enriquez} B. Enriquez,
Elliptic associators, 
{\it Sel.\ Math.\ New Ser.} \textbf{20}, 491--584 (2014)

\bibitem{Kontsevich} M. Kontsevich, {\it Formal (non)-commutative symplectic geometry}, The Gelfand Mathematical
Seminars, 1990-1992 (L. Corwin, I. Gelfand, J. Lepowsky, eds.), Birkhauser,
1993, pp. 173--187

\bibitem{Pollack} A. Pollack, {\it Relations between derivations arising from modular forms}, Ph.D. thesis, Duke University, 2009.

\end{thebibliography}
\end{document}